\newtheorem{theorem}{Theorem}
\newtheorem{lemma}{Lemma}
\newtheorem{definition}{Definition}
\newtheorem{proposition}{Proposition}
\newtheorem{corollary}{Corollary}
\newtheorem{remark}{Remark}
\newcommand{\indicator}{\mathbf{1}}
\newcommand{\F}{\mathcal{F}}
\newcommand{\R}{\mathbb{cR}}
\newcommand{\M}{\mathcal{M}}
\renewcommand\P{\operatorname{\mathbf{P}}}
\newcommand\E{\operatorname{\mathbf{E}}}
\newcommand\Mx{{\operatorname{\mathbf{M}}_x}}
\pgfplotsset{compat=1.18}
\pgfmathsetmacro{\minimo}{-2} 
\pgfmathsetmacro{\maximo}{2.5} 
\pgfmathsetmacro{\k}{1}
\pgfmathsetmacro{\K}{2}
\pgfmathsetmacro{\xi}{-0.2107856}
\pgfmathsetmacro{\xs}{1.195415}
\pgfmathsetmacro{\nnn}{-0.0003347508} 
\pgfmathsetmacro{\nn}{-0.0094730103} 
\pgfmathsetmacro{\pp}{0.0592888971} 
\pgfmathsetmacro{\ppp}{0.0011816918} 
\pgfmathsetmacro{\b}{-12*\pp/7} 
\pgfmathsetmacro{\bb}{-12*\ppp/5} 
\pgfmathsetmacro{\bbb}{15/11} 
\pgfmathsetmacro{\bbbb}{-\k} 
\pgfmathsetmacro{\c}{-12*\nn/7} 
\pgfmathsetmacro{\cc}{-12*\nnn/5} 
\pgfmathsetmacro{\ccc}{3/5} 
\pgfmathsetmacro{\cccc}{-\K} 
\pgfmathsetmacro{\xval}{1.195415}
\pgfmathsetmacro{\xval}{\xi}
\begin{document}

\title{Dynkin Games for L\'evy Processes}

	\author{ Laura Aspirot \thanks{laura.aspirot@fcea.edu.uy} \hspace{0.1cm} \, \hspace{.2cm}   
	Ernesto Mordecki \thanks{mordecki@cmat.edu.uy}  \hspace{.2cm}    Andres Sosa \thanks{andres.sosa@fcea.edu.uy}
	}
	\maketitle

\begin{abstract}
We obtain a verification theorem for solving a Dynkin game driven by a L\'evy process. 
The result requires finding two averaging functions that, 
composed respectively with the supremum and the infimum of the process, 
summed, and taking the expectation, provide the value function of the game. 
The optimal stopping rules are the respective hitting times of the support sets of the averaging functions. 
We also provide some hints to understand in which type of problems the
verification result can be used.
The proof relies on fluctuation identities of the underlying L\'evy process. 
We illustrate our result with examples, three of them with affine payoffs for different processes, and a fourth consisting of a mathematical finance application.
The smooth pasting property of the solutions is not always present.
\end{abstract}
{{\bf Keywords:}\ \it Dynkin Game, L\'evy processes, Wiener-Hopf Factorization}

\section{Introduction}

By a Dynkin game we understand an optimal stopping game involving two players, called the \emph{max} and \emph{min} players. 
The max player chooses the moment to stop a stochastic process, and receives the value of the stopped process from the min player;
the min player chooses the moment to stop another stochastic process, and has to pay the value of this second stopped process to the max player.
The game ends once the first of these two moments has been chosen.
In the present paper, these moments are modeled by stopping times under a common filtration, 
and the payoffs processes are compositions with different functions 
$G_1$ and $G_2$ of a single stochastic processes 
$X=\{X_t\colon t\geq 0\}$,
an underlying L\'evy process. 
As a two player game, when a minimax theorem holds, we say that the game has a value, 
and if there exists a pair of strategies that realizes this value, 
this pair of stopping times is called a Nash equilibrium.



A variant of this game was first formulated by Dynkin in 1969 
(see \cite{dynkin1969}), 
as an extension of the classical optimal stopping problem for sequences of random variables, 
proving the existence of a value for the game, and finding a pair of approximate optimal stopping times.
The methodology to prove these results is nowadays called the \emph{martingale approach}, initiated by Snell \cite{snell} for the optimal stopping problem (see the presentation of the martingale and Markovian approaches to optimal stopping in \cite{peskir2008}).
In the very same year, two related publications appeared. 
First, 
a game formulated for Markov chains by Frid in \cite{frid1969},
and second, 
one formulated for the Wiener process by Gusein-Zade in \cite{gusein1969}.
In these last two papers, each player has a set where he can stop the process, 
being these two sets disjoint.

The existence of the value of these games is further considered in different frameworks.
To mention two different approaches, Alario-Nazaret et al. establish, for optional processes, the existence of a saddle point under the so called ``Mokobodzki's assumption'' \cite[Thm. 2.3.]{alarionazaret}, and Cvitani\'c and Karatzas \cite{cvitanickaratzas},
construct the value as the solution of a doubly reflected backward stochastic differential equation, in this case the underlying process being a diffusion.

Our formulation, initiated by Neveu \cite[VI-6]{neveu},
is the stochastic game proposed by Ekstr\"om and Peskir \cite{ekstrom2008}
(see also Peskir \cite{peskir2008}). 
More precisely, Ekstr\"om and Peskir \cite{ekstrom2008} prove, 
for a strong Markov process, with right continuous and quasi-left-continuous trajectories,
under the order condition $G_1\leq G_2$ and classical integrability conditions, that the game has a value and also a Nash equilibrium, where the stopping times are the hitting times of the sets where the value coincides with the respective payoff functions.
This fine result is achieved in several steps, based first on the martingale approach, using the Wald-Bellman equations 
and afterwards the Markovian approach.
The Nash equilibrium strategies for each player are stopping times, 
respectively obtained as hitting times of certain Borel sets.
Our purpose is then, in this framework, to find explicit solutions when the underlying Markov process is a general L\'evy process.


For games when the order condition does not hold, see for instance 
Touzi and Vieille \cite{touzivieille},
and more recently Christensen and Schultz \cite{christensenschultz}.
Games where the integrability condition does not hold, in the framework of positive diffusions, are considered by Ekstr\"om and Villeneuve \cite{ekstromvilleneuve}, where interesting examples are provided.
As in the case of optimal stopping problems, there is a strong motivation coming from mathematical finance. 
In the case of a game, the max player is the holder of an American type contract 
(like in the optimal stopping situation), 
meanwhile the min player is the issuer of the contract, that, paying a penalty, 
can also decide the time of execution of the American contract. 
This contract receives  the name of \emph{callable} American contracts.
In this direction, we can mention the work of Kifer \cite{kifer_2000},
who studies the non-arbitrage pricing of such a contract, 
for both the Cox-Ross-Rubinstein binomial model and the Black-Scholes model;
Kyprianou \cite{kyprianou_2004}, 
where explicit solutions for put callable perpetual options are found; 
and Emmerling \cite{emmerling_2012}, where explicit solutions of callable call options are provided. 
These last two  works consider the Black-Scholes model.



When considering particular classes of processes, a situation in which one expects to obtain more explicit solutions to particular problems,
there are important contributions when the underlying process is a diffusion.
In addition to the aforementioned results of Kyprianou \cite{kyprianou_2004} and Emmerling \cite{emmerling_2012}, 
Alvarez \cite{alvarez} relies on the fundamental solutions of the second-order differential equation associated with the generator of the diffusion process in order to find the value of a game and characterize equilibrium points in some particular situations.
In this respect, 
it is well known that in order to find explicit solutions to \emph{boundary} problems for a diffusion process,
such as computing hitting barrier probabilities, finding a stationary distribution of a reflected diffusion within an interval, 
solving an optimal stopping problem, 
or solving a stopping game as above, the fundamental solutions often contain all the necessary information to solve the aforementioned problems.



However, the generalization of these ideas or procedures to L\'evy processes encounter significant difficulties. 
The key problem is the overshoot: boundary conditions in L\'evy processes are not limited to one or two points (the extremes of the interval where the process evolves) but extend over the entire line, as the process, when stopped, can potentially reach any point.
In analytic terms, many boundary problems for diffusions can be approached by solving an ordinary differential equation
on an interval or a half-line,
whereas the same type of problem for a L\'evy process requires to solve an integro-differential equation, 
which, in general, does not have explicit solutions.


At least two strategies have been proposed to face this difficulty. 
An approach is to consider L\'evy processes with one-sided jumps, 
called \emph{spectrally} (say negative) L\'evy processes. 
This fact simplifies the boundary conditions when the process does not jump over a boundary.
Even in the case when the process does overshoot, 
the exponential nature of the distribution of one of the Wiener-Hopf factors (the maximum in the case of negative jumps) can help to obtain useful information regarding the solution of the boundary problem,
as the optimal threshold for the American put option for spectrally negative L\'evy process found by Chan \cite{chan}. 
In many of these cases, the scale function --which is computable for large classes of these processes (see Hubalek and Kyprianou \cite{hk2010} and Kuznetsov \cite{kkr})-- provides valuable information for solving some problems.
Examples of this situation are the Dynkin games for spectrally L\'evy processes considered by Bardoux and Kyprianou in \cite{bardoux} (the McKean game) and \cite{bardoux2} (the Shepp-Shiryaev game). 

A second approach leverages the memoryless property of exponential random variables: when the jump is exponential, the overshoot is independent of the departure point, potentially allowing for closed-form solutions to various boundary problems. 
Based on this principle, it is possible to consider boundary problems for L\'evy processes with jump distributions to include phase-type distributions 
\cite{asmussen},
or those with rational transforms \cite{lm},
or even when the  density of the L\'evy measure is given by an infinite series of exponential functions with positive coefficients, as considered in \cite{kuznetsov:2010}.
Examples of closed solutions for optimal stopping of L\'evy processes with exponential-like jumps can be found, for instance in 
\cite{asmussen},
\cite{kou},
or
\cite{mordecki}.
Closed solutions for Dynkin games in the context of mathematical finance (i.e. the pricing of certain callable perpetual contracts) for processes with positive exponential jumps have been found in Gapeev and K\"uhn \cite{gapeev}.

At the same time, there have been efforts to solve optimal stopping problems for general L\'evy processes using the distributions of the supremum and infimum of the process. 
The first work addressing one-sided optimal stopping problems through the distribution of the supremum
was performed for general random walks by Darling et al. \cite{darling},
and generalized to L\'evy processes by Mordecki \cite{mordecki} (see also \cite{alili}).
Posterior developments in optimal stopping and the supremum can be found in the monograph by Kyprianou \cite{Kyprianou}. 
For two-sided problems, both the infimum and the supremum are expected to participate in the solution. 
A verification theorem for L\'evy processes (as a particular case of Hunt processes) 
is presented in \cite{mordecki-salminen}. 
Another verification theorem for the two-sided case was presented in \cite{CS}.
A third verification theorem for L\'evy processes, where also examples with closed solutions are included, 
was provided by Mordecki and Oli\'u  \cite{Oliu}.
In a parallelism with diffusions, one can think that the distributions of the infimum and the supremum of a L\'evy process play the role of the fundamental solutions of a diffusion, i.e., they contain all the information necessary to solve boundary problems, and, if known, allow for an explicit solution of the problem.


As we mentioned, in the present paper we analyze a Dynkin game in the formulation of Ekstr\"om and Peskir \cite{ekstrom2008}, 
with the particularity that the underlying process is a L\'evy process.
Although the two-sided optimal stopping (with only one max-player)
is an optimization problem,
and a Dynkin game with two players requires to find an equilibrium, i.e. it is a minimax-problem,
we found that some ideas of \cite{Oliu} are applicable,
and a (somewhat similar) verification theorem is obtained.
More precisely, 
the verification theorem proposes to construct the value function in terms of 
the infimum and supremum of the L\'evy process, 
with the help of  two \emph{averaging} functions,
which should be found.
The optimal stopping rules are the support of the respective averaging functions.
Naturally, as the distribution of the infimum and the supremum of the process are involved,
the Wiener-Hopf factorization obtained by Rogozin \cite{rogozin}  
for L\'evy processes (see also \cite{Bertoin} or \cite{Kyprianou}), 
appears as a natural tool to find explicit solutions for Dynkin games under this approach. 
%

We afterwards apply this verification result in two situations.
We first consider three 
examples with symmetric affine payoff functions and L\'evy processes with known Wiener-Hopf factors,
finding explicit solutions.
The processes considered are the Brownian motion with drift, 
the Cram\'er-Lundberg process with exponential jumps, 
and the compound Poisson process with double sided exponential jumps. 
These three examples share analytical properties, 
and an explicit solution to a symmetric game is presented 
(symmetric games have not been considered in the literature).
Secondly, we present a financial application, by the consideration of a perpetual callable futures contract. It is a classical perpetual contract, where not only the holder can select the execution time, but the issuer can also choose it, giving rise to a Dynkin game. The stock in this financial problem is modeled through a Kou's process \cite{kou}.



The rest of the paper is organized as follows. 
Section \ref{section:preliminaries} includes some preliminaries, 
necessary to introduce in Section  \ref{section:main} the main verification theorem.
This section also includes the proof of this main result, 
and the computation of the lateral derivatives of the value function at the contact points, 
in terms of the averaging functions and the possible mass of the supremum 
(or infimum) of the process at the origin.
Section \ref{section:applications} presents an application.
The Dynkin game is driven by a process with known Wiener-Hopf factors,
and the payoff functions are affine.
For three processes (Brownian motion with drift, Cramér-Lundberg and Compound Poisson) we obtain the explicit solutions of the considered Dynkin game, 
and illustrate each situation with numerical examples. 
The smooth pasting property is analyzed in each example, as in some cases it does not hold.
Section \ref{section:callable} presents the computation of optimal strategies 
and value function in a callable perpetual futures contract, 
with an underlying Kou's process \cite{kou}.
In Section \ref{sec:conclusion} some conclusions are presented.


\section{Preliminaries}\label{section:preliminaries}
Let $X=\{X_t\colon t\geq 0\}$ be a L\'evy process defined on a
stochastic basis ${\cal B}=(\Omega, {\cal F}, {\bf F}=({\cal
F}_t)_{t\geq 0}, \P_x)$ departing from $X_0=x$. 
Assume that the filtration ${\bf F}$ satisfies the usual conditions (see \cite{js}).
The corresponding expectation is denoted by $\E_x$,
and for short we denote $\E=\E_0$ and $\P=\P_0$. 
The L\'evy-Khintchine formula characterizes the law of the process,
stating, for $z\in i\R$, that 
$
\E e^{zX_t}=e^{t\Psi(z)}
$
with
\begin{equation*}
\Psi(z)=cz+\frac{\sigma^2}{2}z^2+\int_{\R}\left(e^{zy}-1-zy\indicator_{\{|y|<1\}}\right)\Pi(dy),
\end{equation*}
where $c\in\R$, $\sigma\geq 0$ and $\Pi(dy)$ is a non-negative measure (the jump measure) 
that satisfies the integrability condition $\int_{\R}(1\wedge y^2)\Pi(dy)<\infty$.
For general references on L\'evy processes see \cite{Bertoin} or \cite{Kyprianou}. 
Given the stochastic basis ${\cal B}$ the set of stopping times is the set of random variables
$$
\M=\{\tau\colon\Omega\to[0,\infty] \text{ such that } \{\tau\leq t\}\in\mathcal{F}_t \text{ for all $t\geq 0$}\}.
$$
Observe that we allow the possibility $\tau=\infty$, 
as several stopping rules that participate in the solution of the considered problems are within this class.

The Wiener-Hopf factorization relates the characteristic functions of the overall infimum and supremum of a L\'evy process, stopped at an independent exponential time,
with the characteristic exponent given by the L\'evy-Khintchine formula \cite[Chapter VI]{Bertoin}.  
These two random variables play a key role in the solution of two-sided problems
and are defined respectively by
\begin{equation}\label{eq:supinf}
I=\inf\{X_t\colon 0\leq t\leq e_r\}
\qquad\text{and}\qquad
S=\sup\{X_t\colon 0\leq t\leq e_r\},
\end{equation}
where $e_r$ is an exponential random variable of parameter $r>0$, independent of $X$.
Observe that as $r>0$, both random variables $S$ and $I$ are proper. 
It is also worth saying that the parameter $r$ plays the role of a discount factor in Definition \ref{def:dg}
of the Dynkin game, and remains the same constant throughout the paper.
Then, Wiener-Hopf factorization (see \cite{rogozin} or \cite[Theorem VII.5]{Bertoin}) 
states
\begin{equation}
    \label{eq:wh}
\frac{r}{r-\Psi(z)}=\E(e^{zS})\E(e^{zI}).
\end{equation}
This equation allows, in some cases, to compute the distribution of the supremum and/or the infimum of a L\'evy process (see for instance \cite{kuznetsov} or \cite{lm}).

Our definition of a Dynkin game requires two technical conditions, that we present below. Consider $G_i\colon\R\to\R$ $(i=1,2)$, a L\'evy process $X$, 
and a positive discount factor $r$. 

The first condition (see \cite[eq.(2.1)]{ekstrom2008}) is the integrability condition
\begin{equation}\label{eq:ic}
\E_x\left(\sup_{t\geq 0}\left|e^{-rt}G_i(X_t)\right|\right)<\infty,\quad\text{for $i=1,2$}.
\end{equation}
The second condition involves the behavior at infinity, 
requiring that
\begin{equation}\label{eq:limit}
    \lim_{t\to\infty}e^{-rt}G_i(X_t)=0,\quad i=1,2.
\end{equation}
Then, for an arbitrary stopping time $\tau\in\M$, we define
$$
e^{-r\tau}G_i(X_\tau)\indicator_{\{\tau=\infty\}}=0, \quad i=1,2,
$$
that is consistent in the sense that
\begin{equation}\label{eq:infinity}    
\lim_{t\to\infty}\E_x\left(e^{-r(\tau\wedge t)}G_i(X_{\tau\wedge t})\right)=
\E_x\left(e^{-r\tau}G_i(X_{\tau})
\indicator_{\{\tau<\infty\}}\right), \quad i=1,2,
\end{equation}
where the limit is obtained by dominated convergence using \eqref{eq:ic} and \eqref{eq:limit}. 

\begin{definition}[Dynkin game]\label{def:dg}
Let 
$G_1(x)\leq G_2(x)$
be two real continuous payoff functions.
Consider a L\'evy process $X$ and a discount factor $r>0$, 
such that conditions \eqref{eq:ic} and \eqref{eq:limit} above hold.
Given two stopping times $\sigma$ and $\tau$, define the expected payoff
$$
\Mx(\sigma,\tau)=\E_x\left(
e^{-r\tau}G_1(X_{\tau})\indicator_{\{\tau\leq\sigma\}}
+
e^{-r\sigma}G_2(X_{\sigma})\indicator_{\{\sigma<\tau\}}
\right).
$$
The Dynkin game (DG) problem consists
in finding the value function $V(x)$ and two optimal stopping rules ${\sigma^*}$ and ${\tau^*}$ such that
\begin{align}
V(x)&=\inf_{\sigma}\sup_{\tau}\Mx(\sigma,\tau)=\sup_{\tau}\inf_{\sigma}\Mx(\sigma,\tau)=\Mx(\sigma^*,\tau^*).
\label{eq:dg}
\end{align}
\end{definition}
It is important to mention that, 
in terms of game theory, there are two players, 
the first one is the max player, that chooses a strategy $\tau$; 
the second one is the min  player,
and chooses a strategy $\sigma$.  As a result of these elections, the min player pays 
$$
e^{-r\tau}G_1(X_{\tau})\indicator_{\{\tau\leq\sigma\}}
+
e^{-r\sigma}G_2(X_{\sigma})\indicator_{\{\sigma<\tau\}},$$
to the max player. In this context the pair of optimal strategies $(\sigma^*,\tau^*)$ is called a Nash equilibrium.

The following simple result presents inequalities useful for solving a Dynkin game
(see \cite[(1.4)]{ekstrom2008}).   

\begin{proposition}\label{lemma:dg}
Consider a pair $(\tilde{\sigma},\tilde{\tau})$ of stopping times  for the problem  in Definition \ref{def:dg} and a real valued function $\widetilde{V}(x)$, defined by 
\begin{align}
\widetilde{V}(x)&=\Mx(\tilde{\sigma},\tilde{\tau}).\tag{MG}\label{eq:mg}
\end{align}
Assume that the two following conditions hold
\begin{align}
\widetilde{V}(x)&\geq\Mx(\tilde{\sigma},\tau),
\quad\text{ for all $\tau\in\M$},\tag{SPMG}\label{eq:spmg}\\
&\notag\\
\widetilde{V}(x)&\leq\Mx(\sigma,\tilde{\tau}),
\quad\text{ for all $\sigma\in\M$}.\tag{SBMG}\label{eq:sbmg}
\end{align}
Then, the DG in Definition \ref{def:dg}
has value function $\widetilde{V}(x)$, and $(\tilde{\sigma},\tilde{\tau})$  is a  pair of optimal stopping times  for the problem, meaning that \eqref{eq:dg} holds.
\end{proposition}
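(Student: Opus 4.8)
The plan is to establish two chains of inequalities pinning the upper and lower values of the game to $V(x)$, and then to close the loop with the elementary minimax inequality; the statement is the classical verification (saddle-point) lemma, so no genuine obstacle is expected.

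First I would fix $x$ and bound the upper value from above. Since \eqref{eq:spmg} holds for every $\tau\in\M$, taking the supremum over $\tau$ gives $\sup_{\tau}\Mx(\sigma^*,\tau)\leq V(x)$; on the other hand $\sigma^*$ is one admissible choice for the min player, so $\inf_{\sigma}\sup_{\tau}\Mx(\sigma,\tau)\leq\sup_{\tau}\Mx(\sigma^*,\tau)$. Chaining these yields $\inf_{\sigma}\sup_{\tau}\Mx(\sigma,\tau)\leq V(x)$. Symmetrically, using \eqref{eq:sbmg} for every $\sigma\in\M$ and taking the infimum over $\sigma$ gives $\inf_{\sigma}\Mx(\sigma,\tau^*)\geq V(x)$, while $\tau^*$ is one admissible choice for the max player, whence $\sup_{\tau}\inf_{\sigma}\Mx(\sigma,\tau)\geq\inf_{\sigma}\Mx(\sigma,\tau^*)\geq V(x)$.

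Finally I would recall the general inequality $\sup_{\tau}\inf_{\sigma}\Mx(\sigma,\tau)\leq\inf_{\sigma}\sup_{\tau}\Mx(\sigma,\tau)$, valid for any function of two arguments: it follows from $\inf_{\sigma'}\Mx(\sigma',\tau)\leq\Mx(\sigma,\tau)\leq\sup_{\tau'}\Mx(\sigma,\tau')$ by taking first the supremum over $\tau$ and then the infimum over $\sigma$. Combining the three displays gives $V(x)\leq\sup_{\tau}\inf_{\sigma}\Mx(\sigma,\tau)\leq\inf_{\sigma}\sup_{\tau}\Mx(\sigma,\tau)\leq V(x)$, so all three quantities coincide; together with \eqref{eq:mg} this is precisely \eqref{eq:dg}, establishing both that $V$ is the value function and that $(\sigma^*,\tau^*)$ is a Nash equilibrium. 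The only point deserving a word of care is that the suprema and infima need not be attained, which is exactly why the argument is carried out through inequalities rather than by exhibiting explicit maximizers; the real work is deferred to the construction, later in the paper, of a concrete pair $(\sigma^*,\tau^*)$ and function $V$ satisfying \eqref{eq:mg}, \eqref{eq:spmg} and \eqref{eq:sbmg}.
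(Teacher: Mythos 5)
Your argument is correct and is essentially identical to the paper's proof: both establish the elementary inequality $\sup_{\tau}\inf_{\sigma}\Mx(\sigma,\tau)\leq\inf_{\sigma}\sup_{\tau}\Mx(\sigma,\tau)$ and then sandwich both the upper and lower values between $V(x)$ and itself using \eqref{eq:spmg}, \eqref{eq:sbmg} and \eqref{eq:mg}. No gaps; nothing further to add.
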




\section{Main result}\label{section:main}
In this section we present a verification theorem that constructs the solution of a Dynkin game in terms of the supremum and infimum of the process, 
combined with two (unknown) averaging functions.
\begin{theorem}
\label{theorem:1}
Consider a L\'evy process $X$,
a discount rate $r>0$, and two continuous reward functions $G_1\leq G_2\colon\R\to\R$.
Assume that there exist two points 
 $x_I<x_S$
and two continuous non-decreasing functions: 
$Q_I$ s.t. $Q_I(x)=0$ for  $x_I\leq x$
and
$Q_S$ s.t. $Q_S(x)=0$ for $ x\leq x_S$
(named \emph{averaging functions});
and such that
\begin{equation}\label{eq:equal}
\E_x Q_I(I)+\E_x Q_S(S)=
\begin{cases}G_1(x),&\quad\text{when $x_S\leq x$,}\\
\\
G_2(x),&\quad\text{when $x\leq x_I$,}\end{cases}
\end{equation}
where $I$ and $S$ are defined in \eqref{eq:supinf}.
Define the function
\begin{equation}\label{eq:ve}
V(x)=
\E_x Q_I(I)+ \E_x Q_S(S), \text{ for all $x\in\R$.}\end{equation}
Then, if the condition
\begin{equation}\label{eq:geq}G_1(x)\leq V(x)\leq G_2(x),\end{equation}
holds for all $x\in[x_I,x_S]$, the DG  of Definition \ref{def:dg}
has value function $V(x)$ in \eqref{eq:ve}, and optimal stopping rules given by
\begin{equation}\label{eq:tau0}\sigma^*=\inf\{t\geq 0\colon X_t\leq x_I\}\qquad\text{and}\qquad\tau^*=\inf\{t\geq 0\colon X_t\geq x_S\}.\end{equation}
\end{theorem}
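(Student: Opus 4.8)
The strategy is to verify the three conditions of Proposition~\ref{lemma:dg} for the candidate value function $V$ in \eqref{eq:ve} and the candidate stopping rules $\sigma^*,\tau^*$ in \eqref{eq:tau0}. The key technical tool will be a family of martingale/supermartingale/submartingale identities built from the averaging functions. The heuristic behind \eqref{eq:ve} is that, since $I$ is the infimum and $S$ the supremum up to an independent $\mathrm{Exp}(r)$ time $e_r$, quantities like $\E_x Q_S(S)$ have a Dynkin-type representation: writing $S = \sup_{t\le e_r}X_t = \max(X_{e_r}, \text{running sup})$, and using that $Q_S$ is supported on $[x_S,\infty)$, one expects $\E_x Q_S(S)$ to be expressible via the hitting time $\tau^* = \inf\{t\ge 0\colon X_t\ge x_S\}$ and the overshoot distribution at that level, because before $X$ reaches $x_S$ the supremum process contributes nothing to $Q_S(S)$. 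Symmetrically for $Q_I$ and $\sigma^*$. So my first step is to make this precise: prove that for any stopping time $\tau$,
$$
\E_x Q_S(S) \;=\; \E_x\!\left(e^{-r\tau}\,\E_{X_\tau}Q_S(S)\right)\;+\;\E_x\!\left(\bigl(1-e^{-r\tau}\bigr)\,Q_S(S_\tau)\right),
$$
or some equivalent decomposition, where $S_\tau$ is the supremum before $\tau$; the point is that $e^{-rt}\E_{X_t}Q_S(S)+\big(\text{contribution of running sup before }t\big)$ is a martingale, a standard fluctuation-identity fact. The analogous statement holds for $Q_I$, and summing gives that $e^{-rt}V(X_t)$ plus correction terms is a martingale.

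Next, I would exploit the monotonicity and support properties. Because $Q_S$ is non-decreasing and vanishes on $(-\infty,x_S]$, the "running-sup correction term'' is non-negative and is zero as long as the process stays below $x_S$; similarly the $Q_I$-correction term is non-positive and is zero while the process stays above $x_I$. This will turn the martingale identity into: $e^{-rt}V(X_t)$ is a supermartingale when restricted to times before $\sigma^*$ (so that the only active correction is the non-negative $Q_S$-term, pushing the expectation down — wait, one must track signs carefully), and a submartingale before $\tau^*$. Combined with the boundary identity \eqref{eq:equal} — which says $V=G_1$ on $[x_S,\infty)$ and $V=G_2$ on $(-\infty,x_I]$ — and the sandwiching \eqref{eq:geq} on the middle interval $[x_I,x_S]$, I get $G_1\le V\le G_2$ everywhere. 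That is exactly what is needed to run the optional-sampling arguments.

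Then I would prove \eqref{eq:mg}, \eqref{eq:spmg}, \eqref{eq:sbmg} in turn. For \eqref{eq:mg}: under $\P_x$, on $\{\tau^*<\sigma^*\}$ the process hits $x_S$ first, where $V=G_1$; on $\{\sigma^*\le\tau^*\}$ it hits $x_I$ first (or up to a.s.-equality on the boundary), where $V=G_2$; applying optional sampling to the martingale at $\sigma^*\wedge\tau^*$ — noting the correction terms both vanish on $[\![0,\sigma^*\wedge\tau^*]\!]$ since the process is confined to $[x_I,x_S]$ there — gives $V(x)=\E_x(e^{-r(\sigma^*\wedge\tau^*)}V(X_{\sigma^*\wedge\tau^*})) = \Mx(\sigma^*,\tau^*)$, where the passage to possibly-infinite stopping times uses \eqref{eq:ic}, \eqref{eq:limit}, \eqref{eq:infinity} for uniform integrability. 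For \eqref{eq:spmg}, fix $\sigma^*$ and an arbitrary $\tau$: on $\{\tau\le\sigma^*\}$ one stops at $X_\tau$ where $V\ge G_1(X_\tau)$; using the submartingale property of $e^{-rt}V(X_t)$ up to $\sigma^*$ (valid because before $\sigma^*$ only the non-positive $Q_I$-correction could act, but it too vanishes above $x_I$, hence $e^{-rt\wedge\sigma^*}V$ is a genuine submartingale there) gives $V(x)\le\E_x(e^{-r(\tau\wedge\sigma^*)}V(X_{\tau\wedge\sigma^*}))\le\Mx(\sigma^*,\tau)$; on $\{\sigma^*<\tau\}$ the payoff is $e^{-r\sigma^*}G_2(X_{\sigma^*})$ with $G_2(X_{\sigma^*})\ge V(X_{\sigma^*})$ since $X_{\sigma^*}\le x_I$. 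The inequality \eqref{eq:sbmg} is symmetric, using the supermartingale property up to $\tau^*$ and $G_1(X_{\tau^*})\le V(X_{\tau^*})$.

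\textbf{Main obstacle.} The delicate point is the first step: rigorously establishing the Dynkin/fluctuation decomposition of $\E_x Q_S(S)$ and $\E_x Q_I(I)$ and, from it, the precise (super/sub)martingale behaviour of $e^{-rt}V(X_t)$ up to the respective hitting times — in particular handling the overshoot, the possible atom of $S$ (resp. $I$) at the origin (which the excerpt flags as relevant for the lateral-derivative computations), and the fact that $Q_I,Q_S$ are only assumed continuous and monotone, not smooth, so one cannot invoke Itô or an integro-differential generator directly. I expect this to be carried out via layer-cake / Fubini arguments on $\int Q_S'(\mathrm{d}y)\,\P_x(S>y)$ together with the strong Markov property at the level-passage time, rather than through any PDE. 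The remaining steps — optional sampling with the integrability hypotheses, and the case analysis on $\{\tau\lessgtr\sigma^*\}$ — are then routine, modulo careful bookkeeping of the $\tau=\infty$ convention via \eqref{eq:infinity}.
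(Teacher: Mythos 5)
Your overall architecture coincides with the paper's: reduce to Proposition \ref{lemma:dg}; use the monotonicity and the support of $Q_I,Q_S$ to write $Q_I(I)=\inf_{0\le t\le e_r}Q_I(X_t)$ and $Q_S(S)=\sup_{0\le t\le e_r}Q_S(X_t)$; split these pathwise extrema at a stopping time; and apply the strong Markov property together with the memorylessness of $e_r$ to get $V(x)=\E_x\left(e^{-r\rho}V(X_\rho)\right)$ at $\rho=\sigma^*\wedge\tau^*$ and one-sided versions at $\tau\wedge\sigma^*$ and $\sigma\wedge\tau^*$. The paper works directly with these path functionals rather than through a layer-cake representation, and it never needs your additive decomposition of $\E_xQ_S(S)$; as stated that decomposition is not an identity (the pre-$\tau$ and post-$\tau$ contributions combine as a maximum, not a sum), though this is harmless for the inequalities since one term is nonnegative and vanishes in the cases you need.

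There is, however, a concrete sign error in your treatment of \eqref{eq:spmg} and \eqref{eq:sbmg}. Before $\sigma^*$ the process has not yet entered $(-\infty,x_I]$, so it is the non-positive $Q_I$-correction that vanishes there, while the process may well exceed $x_S$ before $\tau\wedge\sigma^*$ (since $\tau$ is arbitrary), so the non-negative $Q_S$-correction is the one that can act. Discarding it yields the \emph{super}martingale inequality $V(x)\ge\E_x\left(e^{-r(\tau\wedge\sigma^*)}V(X_{\tau\wedge\sigma^*})\right)$, which combined with $V\ge G_1$ everywhere and $V=G_2$ on $(-\infty,x_I]$ gives \eqref{eq:spmg}, i.e.\ $V(x)\ge\Mx(\sigma^*,\tau)$. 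You assert the opposite (``before $\sigma^*$ only the non-positive $Q_I$-correction could act'', hence a submartingale), and your displayed chain $V(x)\le\E_x\left(e^{-r(\tau\wedge\sigma^*)}V(X_{\tau\wedge\sigma^*})\right)\le\Mx(\sigma^*,\tau)$ proves the reverse of \eqref{eq:spmg}; moreover its second inequality would require $V(X_\tau)\le G_1(X_\tau)$ on $\{\tau\le\sigma^*\}$, contradicting \eqref{eq:geq}. The same swap occurs for \eqref{eq:sbmg}, where up to $\tau^*$ one has a submartingale, not a supermartingale. Once these two assignments are interchanged, the argument closes and coincides with Lemma \ref{lemma:spmg} of the paper.
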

\begin{remark}
Observe that, taking into account the asymptotic behavior of a  L\'evy process 
(see Thm. VI.12 in \cite{Bertoin}) the stopping times in \eqref{eq:tau0} satisfy
$\P(\sigma^*\wedge\tau^*<\infty)=1$.
Furthermore, in view of \eqref{eq:infinity} and condition \eqref{eq:geq},
for an arbitrary stopping time in $\M$,
we have
\begin{equation}
\label{eq:V_infinity}    
\E_x\left(e^{-r\tau}V(X_{\tau})
\indicator_{\{\tau=\infty\}}\right)=0.
\end{equation}
\end{remark}

Despite the fact that we do not require specific conditions on $G_1$ and $G_2$ some simple arguments show necessary conditions in order to be able to apply the Theorem above.
\begin{corollary}
If Theorem \ref{theorem:1} holds  
we obtain the following necessary conditions on $G_1$ and $G_2$.
\begin{itemize}
\item[\rm(i)]
$G_1(x) \text{ is increasing for $x\geq x_S$}$,
\item[\rm(ii)] 
$G_2(x) \text{ is increasing for $x\leq x_I$}$,
\item[\rm(iii)]
$\lim_{x\to-\infty}G_2(x)<0$,
\item[\rm(iv)]
$\lim_{x\to+\infty}G_1(x)>0$,
\item[\rm(v)] $G_1(x)<G_2(x)$ for $x\in\mathbb{R}.$ 
\end{itemize}

\begin{proof}
From the representation \eqref{eq:ve}, by dominated convergence, taking into account the signs and monotonicity properties of $Q_I$ and $Q_S$, conditions \rm(i)-\rm(iv) follows.
Condition \rm(v) is obtained as optimal stopping sets are disjoint.
\end{proof}
\end{corollary}

\subsection{Proof of Theorem \ref{theorem:1}}
The proof of the main result is carried out in the following two Lemmas.
\begin{lemma}[MG]\label{lemma:1}
Consider a DG as in Definition \ref{def:dg}.
Assume that there exist points $x_S$, $x_I$ and averaging functions $Q_S,Q_I$  that satisfy the hypothesis of Theorem \ref{theorem:1}.
Then, for $V$  and $(\tau^{*},\sigma^{*})$ defined by \eqref{eq:ve} and \eqref{eq:tau0}, the equality \eqref{eq:mg} holds.
\end{lemma}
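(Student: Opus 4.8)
The goal is to show that, under the hypotheses of Theorem~\ref{theorem:1}, the function $V$ defined by \eqref{eq:ve} coincides with the expected payoff $\Mx(\sigma^*,\tau^*)$ for the specific stopping times $\sigma^*,\tau^*$ in \eqref{eq:tau0}. The natural route is to exploit the fact that on $\{\sigma^*\wedge\tau^*<\infty\}$ (which has full probability by the Remark) one of the two events $\{\tau^*\leq\sigma^*\}$, $\{\sigma^*<\tau^*\}$ occurs, and on each of them the process is, at the stopping instant, at a point where \eqref{eq:equal} identifies $V$ with the relevant payoff $G_i$. So the first step is to rewrite
\[
\Mx(\sigma^*,\tau^*)=\E_x\!\left(e^{-r\tau^*}G_1(X_{\tau^*})\indicator_{\{\tau^*\leq\sigma^*\}}\right)+\E_x\!\left(e^{-r\sigma^*}G_2(X_{\sigma^*})\indicator_{\{\sigma^*<\tau^*\}}\right),
\]
and then, using that $X_{\tau^*}\geq x_S$ on $\{\tau^*<\infty\}$ and $X_{\sigma^*}\leq x_I$ on $\{\sigma^*<\infty\}$, replace $G_1(X_{\tau^*})$ by $V(X_{\tau^*})$ and $G_2(X_{\sigma^*})$ by $V(X_{\sigma^*})$ via \eqref{eq:equal}. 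This reduces the claim to showing
\[
V(x)=\E_x\!\left(e^{-r(\sigma^*\wedge\tau^*)}V(X_{\sigma^*\wedge\tau^*})\right),
\]
i.e. that $e^{-rt}V(X_t)$ behaves like a martingale up to the exit time of the interval $(x_I,x_S)$.

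**Second step — the fluctuation identity.** The substance of the lemma is that the function $x\mapsto \E_x Q_I(I)+\E_x Q_S(S)$, built from the running infimum and supremum up to an independent $e_r$, is ``$r$-harmonic'' for $X$ inside any interval, in the precise sense of the displayed identity above with $\rho:=\sigma^*\wedge\tau^*$. I would prove this by the standard Wiener--Hopf / fluctuation argument: condition on whether $e_r$ falls before or after $\rho$. Writing $e_r$ for the exponential clock and using its memorylessness together with the strong Markov property at $\rho$, one splits $I=\inf_{[0,e_r]}X$ into the infimum over $[0,\rho\wedge e_r]$ and, on $\{e_r>\rho\}$, a fresh infimum of an independent copy of $X$ started at $X_\rho$ over an independent $e_r$; similarly for $S$. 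On the event $\{e_r\leq\rho\}$, the process has not yet left $(x_I,x_S)$, so the running infimum is $>x_I$ and the running supremum is $<x_S$; since $Q_I$ vanishes on $[x_I,\infty)$ and $Q_S$ vanishes on $(-\infty,x_S]$, both $Q_I$ and $Q_S$ evaluate to $0$ on that event and contribute nothing. What survives is exactly the $\{e_r>\rho\}$ part, which by the strong Markov property and $\P(e_r>\rho\mid \F_\rho)=e^{-r\rho}$ is $\E_x\!\left(e^{-r\rho}\big[\E_{X_\rho}Q_I(I)+\E_{X_\rho}Q_S(S)\big]\right)=\E_x\!\left(e^{-r\rho}V(X_\rho)\right)$. (One must be a little careful that on $\{e_r>\rho\}$ the pre-$\rho$ infimum still matters for $I$ — but since that infimum lies in $(x_I,\infty)$ there too, $Q_I$ of the \emph{overall} infimum equals $Q_I$ of the post-$\rho$ infimum, using monotonicity of $Q_I$ and that it is $0$ above $x_I$; the symmetric remark handles $Q_S$. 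This is the point where the sign/support structure of the averaging functions is essential.)

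**Third step — closing the argument.** Combining the two steps: $V(x)=\E_x(e^{-r\rho}V(X_\rho))$, and on $\{\rho<\infty\}$ (full probability) $V(X_\rho)$ equals $G_1(X_{\tau^*})$ when $\rho=\tau^*\leq\sigma^*$ and $G_2(X_{\sigma^*})$ when $\rho=\sigma^*<\tau^*$, by \eqref{eq:equal} applied at points $\geq x_S$ resp. $\leq x_I$; the contribution of $\{\rho=\infty\}$ vanishes by \eqref{eq:V_infinity}. Hence $V(x)=\Mx(\sigma^*,\tau^*)$, which is \eqref{eq:mg}.

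**Main obstacle.** The delicate point is making the conditioning argument of the second step rigorous: justifying the decomposition of $I$ and $S$ at the random time $\rho$ via the strong Markov property combined with the memorylessness of the independent exponential clock (the ``overshoot'' bookkeeping), and checking the dominated-convergence / integrability hypotheses \eqref{eq:ic}--\eqref{eq:limit} so that the $\indicator_{\{\rho=\infty\}}$ term really drops. I expect a helper identity of the form $\E_x\!\left(e^{-r\rho}H(X_\rho)\right)=\E_x\!\left(H^{(r)}(\,\cdot\,)\right)$ for $H(y)=\E_y Q_I(I)+\E_y Q_S(S)$ restricted to the exit configuration, essentially a stopped-Wiener--Hopf identity; once that is in hand the rest is bookkeeping.
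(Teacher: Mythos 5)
Your proposal is correct and follows essentially the same route as the paper's own proof: both reduce the claim to the identity $V(x)=\E_x\bigl(e^{-r(\sigma^*\wedge\tau^*)}V(X_{\sigma^*\wedge\tau^*})\bigr)$ by using \eqref{eq:equal} to identify $V$ with $G_1$ (resp.\ $G_2$) at the exit position, and both establish that identity by writing $Q_I(I)=\inf_{0\le t\le e_r}Q_I(X_t)$ and $Q_S(S)=\sup_{0\le t\le e_r}Q_S(X_t)$ via monotonicity, noting these vanish while the process remains in $[x_I,x_S]$, and then invoking the memorylessness of $e_r$ together with the strong Markov property at $\rho^*=\sigma^*\wedge\tau^*$ (which the paper carries out explicitly as the change of variables in the exponential integral \eqref{eq:uu}--\eqref{eq:s}). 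The ``delicate point'' you flag is handled in the paper exactly as you sketch it.
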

\begin{proof}
Denote 
$U=\R\setminus(x_I,x_S)$. Define $G\colon U\to\R$ by
$$
G(x)=
\begin{cases}
G_1(x),&\quad\text{when $x_S\leq x$,}\\
\\
G_2(x),&\quad\text{when $x\leq x_I$.}
\end{cases}
$$
Denote $\rho^*=\sigma^*\wedge\tau^*$.
Observe that as $X$ is a L\'evy process, $\P_x(\rho^*<\infty)=1$.
As $X_{\rho^*}\in U$ and $V=G$ on $U$, we have
\begin{equation}\label{eq:vege}
\E_x(e^{-r{\rho^*}}G(X_{\rho^*}))=\E_x(e^{-r{\rho^*}}V(X_{\rho^*})).
\end{equation}
On the other side, by the monotonicity of the functions $Q_S$ and $Q_I$,
$$
\aligned 
V(x)&=\E_x Q_I(I)+\E_x Q_S(S)=\E_x\left(
Q_I\left(\inf_{0\leq t\leq e_r}X_t
\right)+
Q_S\left(\sup_{0\leq t\leq e_r} X_t
\right)
\right)\\ &
=\E_x\left(
\inf_{0\leq t\leq e_r}Q_I(X_t)
+
\sup_{0\leq t\leq e_r}Q_S(X_t)
\right).
\endaligned
$$

Observe that if $e_r<{\rho^*}$ we have $x_I<I\leq S<x_S$ (i.e. the process still lives within $[x_I,x_S]$), 
and then 
\begin{equation*}\label{eq:zero}
\inf_{0\leq t\leq e_r}Q_I\left(X_t\right)=\sup_{0\leq t\leq e_r}Q_S\left( X_t\right)=0,
\end{equation*}
because $Q_I(x)=Q_S(x)=0$ in $[x_I,x_S]$.
Also note, that
\begin{equation*}
\inf_{0\leq t<\rho^*}Q_I\left(X_t\right)=\sup_{0\leq t<\rho^*}Q_S\left(X_t\right)=0.
\end{equation*}
In conclusion, 
\begin{align*}
\E_xQ_I(I)&=\E_x\left(\inf_{0\leq t\leq e_r}Q_I(X_t)\right)=\E_x\left(\inf_{{\rho^*}\leq t\leq e_r}Q_I(X_t)\right)=:q_I(x),\\
&\\
\E_xQ_S(S)&=\E_x\left(\sup_{0\leq t\leq e_r}Q_S(X_t)\right)=\E_x\left(\sup_{{\rho^*}\leq t\leq e_r}Q_S(X_t)\right)=:q_S(x),
\end{align*}
and we have
$$
V(x)=q_I(x)+q_S(x).
$$
Consider now $\tilde{X}=\{\tilde{X}_s=X_{{\rho^*}+s}-X_{\rho^*}\colon s\geq 0\}$ 
that, by the strong Markov property,
is independent of $\F_{\rho^*}$ and has the same distribution as $X$, 
and denote by $\tilde{\E}_x$ the expectation w.r.t. $\tilde{X}$.
Based on these considerations, we have
\begin{align}
q_I(x)&=\E_x {Q_I}(I)=\E_x\left(\sup_{{\rho^*}\leq t\leq e_r}{Q_I}(X_t)\right)\notag\\
&=\E_x\left(\int_{\rho^*}^{\infty}\sup_{{\rho^*}\leq t\leq u}{Q_I}(X_t)re^{-ru}du\right)\label{eq:uu}\\
&=\E_x\left(e^{-r{\rho^*}}\int_0^{\infty}\sup_{{\rho^*}\leq t\leq {\rho^*}+v}{Q_I}(X_t)re^{-rv}dv\right)\label{eq:v}\\
&=\E_x\left(e^{-r{\rho^*}}\int_0^{\infty}\sup_{{\rho^*}\leq t\leq {\rho^*}+v}{Q_I}(X_{\rho^*}+X_t-X_{\rho^*})re^{-rv}dv\right)\label{eq:t}\\
&=\E_x\left(e^{-r{\rho^*}}\int_0^{\infty}\sup_{0\leq s\leq v}{Q_I}(X_{\rho^*}+X_{{\rho^*}+s}-X_{\rho^*})re^{-rv}dv\right)\label{eq:s}\\
&=\E_x\left(e^{-r{\rho^*}}\int_0^{\infty}\sup_{0\leq s\leq v}{Q_I}(X_{\rho^*}+\tilde{X}_s)re^{-rv}dv\right)\notag\\
&=\E_x\left(e^{-r{\rho^*}}\tilde{\E}_{X_{\rho^*}}\left[\int_0^{\infty}\sup_{0\leq s\leq v}{Q_I}(\tilde{X}_s)re^{-rv}dv\right]\right)\notag\\
&=\E_x\left(e^{-r{\rho^*}}\tilde{\E}_{X_{\rho^*}}\left[\sup_{0\leq s\leq e_r}{Q_I}(\tilde{X}_s)\right]\right)=\E_x\left(e^{-r{\rho^*}}q_I(X_{\rho^*})\right),\notag
\end{align}
where in \eqref{eq:uu}
we take expectation w.r.t. the independent variable $e_r$,
we change variables according to $v=u-{\rho^*}$ to pass from \eqref{eq:uu} to \eqref{eq:v},
and denote $s=t-{\rho^*}$ to pass from \eqref{eq:t} to \eqref{eq:s}. 
The same relation holds with $q_S$ and $Q_S$, i.e.:
 $$
q_S(x)=\E_x {Q_S}(S)=\E_x\left(e^{-r{\rho^*}}q_S(X_{\rho^*})\right).
$$
Summing up these two relations, in view of \eqref{eq:vege},
we obtain
$$
\aligned
V(x)	&=\E_x\left(e^{-r{\rho^*}}V(X_{\rho^*})\right)=\E_x\left(e^{-r{\rho^*}}G(X_{\rho^*})\right)\\
	&=\E_x\left(e^{-r{\tau^*}}G_1(X_{\tau^*})\indicator_{\{\tau^*\leq\sigma^*\}}+
    e^{-r{\sigma^*}}G_2(X_{\sigma^*})\indicator_{\{\sigma^*<\tau^*\}}\right),
\endaligned
$$ 
concluding the proof of the Lemma.
\end{proof}


\begin{lemma}[SPMG-SBMG]\label{lemma:spmg}
Consider the DG of Definition \ref{def:dg}.
Assume that there exist points $x_I$, $x_S$ and averaging functions $Q_I,Q_S$  that satisfy the hypothesis of Theorem \ref{theorem:1}.
Then, for $V$ defined by \eqref{eq:ve}, 
\begin{itemize}
\item[\rm(i)] the supermartingale inequality \eqref{eq:spmg} holds,
\item[\rm(ii)] the submartingale inequality \eqref{eq:sbmg} holds.
\end{itemize}
\end{lemma}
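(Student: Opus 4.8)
The plan is to show the two martingale-type inequalities by exhibiting, for $V$ defined in \eqref{eq:ve}, a supermartingale and a submartingale structure after stopping appropriately, and then combine them with the optional stopping theorem (in the form given by \eqref{eq:infinity}--\eqref{eq:V_infinity}). For part (i) I would fix the min-player's strategy at $\sigma^*$ and take an arbitrary $\tau\in\M$; the key claim is that the stopped, discounted process $M^I_t=e^{-r(t\wedge\sigma^*)}V(X_{t\wedge\sigma^*})$ is a supermartingale. The natural way to get this is to decompose $V=q_I+q_S$ as in Lemma \ref{lemma:1} and observe that, by the same strong Markov / exponential-clock computation used there (but now with the generic stopping time $t\wedge\sigma^*$ in place of $\rho^*$), one has the identity $q_S(x)=\E_x(e^{-r(t\wedge\sigma^*)}q_S(X_{t\wedge\sigma^*}))$ exactly, because on $[x_I,\infty)$ the function $Q_S$ still vanishes on $[x_I,x_S]$ and the supremum contribution from before $\sigma^*$ is zero, while $q_I$ only gains a supermartingale inequality $q_I(x)\ge\E_x(e^{-r(t\wedge\sigma^*)}q_I(X_{t\wedge\sigma^*}))$ from the monotonicity of the running infimum/supremum under the exponential time. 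Summing, $V(x)\ge\E_x(e^{-r(t\wedge\sigma^*)}V(X_{t\wedge\sigma^*}))$. Then, since on $\{\tau\le\sigma^*\}$ one has $X_{\tau}\ge$ nothing forced but $V\ge G_1$ everywhere (by \eqref{eq:geq} on $[x_I,x_S]$ and \eqref{eq:equal} with $G_1\le G_2$ off the interval), and on $\{\sigma^*<\tau\}$ one has $X_{\sigma^*}\le x_I$ so $V(X_{\sigma^*})=G_2(X_{\sigma^*})$ by \eqref{eq:equal}, I would bound
$$
\E_x\big(e^{-r(t\wedge\sigma^*\wedge\tau)}V(X_{t\wedge\sigma^*\wedge\tau})\big)\ge \E_x\big(e^{-r\tau}G_1(X_\tau)\indicator_{\{\tau\le\sigma^*\}}\big)+\E_x\big(e^{-r\sigma^*}G_2(X_{\sigma^*})\indicator_{\{\sigma^*<\tau\}}\big)
$$
after letting $t\to\infty$ and using \eqref{eq:infinity}, \eqref{eq:V_infinity}, and $\P_x(\sigma^*\wedge\tau^*<\infty)=1$; this is precisely $V(x)\ge\Mx(\sigma^*,\tau)$, i.e.\ \eqref{eq:spmg}.

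\textbf{The submartingale inequality.} For part (ii) the argument is the mirror image: fix the max-player at $\tau^*$, take an arbitrary $\sigma\in\M$, and show that $e^{-r(t\wedge\tau^*)}V(X_{t\wedge\tau^*})$ is a submartingale. Now the roles reverse: $q_I$ gives an exact identity (on $(-\infty,x_S]$ the infimum contribution before $\tau^*$ vanishes since $Q_I=0$ on $[x_I,x_S]$), while $q_S$ picks up a submartingale inequality because the running supremum of $Q_S(X_t)$ can only increase. Summing gives $V(x)\le\E_x(e^{-r(t\wedge\tau^*)}V(X_{t\wedge\tau^*}))$. Then on $\{\sigma<\tau^*\}$ we have $X_\sigma\ge$ nothing forced but $V\le G_2$ everywhere, and on $\{\tau^*\le\sigma\}$ we have $X_{\tau^*}\ge x_S$ so $V(X_{\tau^*})=G_1(X_{\tau^*})$ by \eqref{eq:equal}; stopping at $t\wedge\tau^*\wedge\sigma$, letting $t\to\infty$, and using the same integrability/limit facts yields $V(x)\le\Mx(\sigma,\tau^*)$, which is \eqref{eq:sbmg}.

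\textbf{Main obstacle.} The delicate point is establishing the local (super/sub)martingale property of $e^{-rt}V(X_t)$ cleanly, since $V$ is given only implicitly through the expectations $\E_x Q_I(I)$ and $\E_x Q_S(S)$ and we do not want to invoke an infinitesimal generator or any regularity of $V$ beyond what \eqref{eq:ve} gives. I expect the cleanest route is to avoid differential arguments entirely and instead repeat the exponential-clock/strong-Markov manipulation of Lemma \ref{lemma:1} with a generic stopping time $\theta$ (namely $\theta=t\wedge\sigma^*$ or $\theta=t\wedge\tau^*$) in place of $\rho^*$: the inequality, rather than equality, appears exactly at the step where one drops the pre-$\theta$ part of the running infimum or supremum, which is legitimate because $Q_I,Q_S\ge 0$ (they are non-decreasing and vanish on $[x_I,x_S]$, hence nonnegative to the right/left of it respectively — one must check the sign conventions, but $Q_S\ge 0$ on $[x_S,\infty)$ and $=0$ to the left, similarly $Q_I\ge 0$). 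So $\inf_{0\le t\le e_r}Q_I(X_t)\le\inf_{\theta\le t\le e_r}Q_I(X_t)$ is the wrong direction — here one must be careful: $Q_I$ composed with the infimum. Since $Q_I$ is non-decreasing, $Q_I(\inf_t X_t)=\inf_t Q_I(X_t)$, and dropping early times only increases the infimum, giving $q_I(x)=\E_x\inf_{0\le t\le e_r}Q_I(X_t)\le \E_x\inf_{\theta\le t\le e_r}Q_I(X_t)$; but to turn the right side into $\E_x(e^{-r\theta}q_I(X_\theta))$ one needs $\theta\le e_r$ and the Markov restart, and on $\{\theta>e_r\}$ the left-side infimum over $[0,e_r]$ already equals $0$ when $X$ stays in $[x_I,x_S]$ — this matching of the two sides on the event $\{\theta>e_r\}$ versus $\{\theta\le e_r\}$ is the one bookkeeping step that requires genuine care, and it is exactly where the choice $\theta=t\wedge\sigma^*$ (so that $X$ has not yet exited below $x_I$, keeping the relevant running extremum at $0$) makes the inequality go the right way. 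Once that is in hand, the rest is the optional-stopping passage to the limit, which is routine given \eqref{eq:ic}, \eqref{eq:limit}, \eqref{eq:infinity}, and the remark after Theorem \ref{theorem:1}.
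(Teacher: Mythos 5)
Your overall architecture is the paper's: replace $\rho^*$ by $\tau\wedge\sigma^*$ (resp.\ $\sigma\wedge\tau^*$) in the exponential-clock / strong-Markov computation of Lemma \ref{lemma:1}, obtain $V(x)\geq\E_x\left[e^{-r(\tau\wedge\sigma^*)}V(X_{\tau\wedge\sigma^*})\right]$, and then compare with the payoff using \eqref{eq:geq} together with $V=G_2$ on $(-\infty,x_I]$ and $V\geq G_1$ everywhere, passing to the limit via \eqref{eq:infinity}--\eqref{eq:V_infinity}. That comparison step and the limiting argument are exactly what the paper does.

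However, your justification of the key inequality swaps the roles of the two terms, in both parts, and rests on two false identities. Note first that $Q_I$ is non-decreasing and vanishes on $[x_I,\infty)$, hence $Q_I\leq 0$ everywhere (not $Q_I\geq 0$, as you assert when you ``check the sign conventions''), while $Q_S\geq 0$. For part (i), with $\theta=\tau\wedge\sigma^*$, it is the \emph{infimum} term that is an exact identity: before $\theta\leq\sigma^*$ the process has not gone below $x_I$, so $Q_I(X_t)=0$ on $[0,\theta)$, and since $Q_I\leq 0$ one gets $\inf_{0\le t\le e_r}Q_I(X_t)=\inf_{\theta\le t\le e_r}Q_I(X_t)\indicator_{\{\theta\le e_r\}}$; this is \eqref{eq:inf}. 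Your claim that ``the supremum contribution from before $\sigma^*$ is zero'' is false: nothing prevents $X$ from exceeding $x_S$ before $\sigma^*$, so $Q_S(X_t)$ can be strictly positive on $[0,\theta)$ and the claimed identity $q_S(x)=\E_x\left(e^{-r\theta}q_S(X_\theta)\right)$ fails. What saves that term is not an identity but the one-sided bound $\sup_{0\le t\le e_r}Q_S(X_t)\geq\sup_{\theta\le t\le e_r}Q_S(X_t)\indicator_{\{\theta\le e_r\}}$, which holds simply because $Q_S\geq 0$ and the supremum is taken over a smaller window; this is \eqref{eq:sup}. Likewise your generic claim $q_I(x)\geq\E_x\left(e^{-r\theta}q_I(X_\theta)\right)$ ``from monotonicity of the running infimum'' points the wrong way as a generic statement (shrinking the window \emph{raises} an infimum), as you yourself notice later; it holds here only as the equality just described. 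In part (ii) the situation is mirrored and you again assign the exact identity to the wrong term: there it is $Q_S$ that vanishes along the path before $\tau^*$ (exact identity), while $Q_I\leq 0$ gives $\inf_{0\le t\le e_r}Q_I(X_t)\leq\inf_{\theta\le t\le e_r}Q_I(X_t)\indicator_{\{\theta\le e_r\}}$, the direction needed for the submartingale bound. All the inequalities you ultimately need are true, so the proof is repaired by interchanging the roles of $Q_I$ and $Q_S$ throughout and correcting the sign of $Q_I$; as written, the central step does not stand.
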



\begin{proof} Let us see (i).
As $Q_I(x)=0$ for $x\geq x_S$, in view of the definition \eqref{eq:tau0} of $\sigma^*$,
for an arbitrary stopping time $\tau\in\M$,
we have the following a.s. identity
\begin{equation}
\label{eq:inf}    
\inf_{0\leq t\leq e_r}Q_I(X_t)=
\inf_{\tau\wedge\sigma^*\leq t\leq e_r}Q_I(X_t)\indicator_{\{\tau\wedge\sigma^*\leq e_r\}}.
\end{equation}
On the other side, as $Q_S(x)\geq 0$, we have
\begin{equation}
\label{eq:sup}    
\sup_{0\leq t\leq e_r}Q_S(X_t)\geq 
\sup_{\tau\wedge\sigma^*\leq t\leq e_r}Q_I(X_t)\indicator_{\{\tau\wedge\sigma^*\leq e_r\}}.
\end{equation}
With these relations in view, we obtain
\begin{align}
    V(x)  &=\E_x Q_I(I)+\E_x Q_S(S)\notag\\
    &=\E_x \left(
    \inf_{0\leq t\leq e_r} Q_I(X_t)
    + 
    \sup_{0\leq t\leq e_r} Q_S(X_t)
    \right)\label{eq:l2}\\
 &\geq\E_x\left[\left(
 \inf_{\tau\wedge\sigma^*\leq t\leq e_r}Q_I(X_t) 
 +
 \sup_{\tau\wedge\sigma^*\leq t\leq e_r} Q_S(X_t)
 \right)\indicator_{\{\tau\wedge\sigma^*<e_r\}}\right]\label{eq:l3}\\ 	
 &=\E_x\left[\int_{\tau\wedge\sigma^*}^\infty\left(
 \inf_{\tau\wedge\sigma^*\leq t\leq v} Q_I(X_t) 
 + 
 \sup_{\tau\wedge\sigma^*\leq t\leq v} Q_S(X_t)
 \right)re^{-rv}\,dv\right]\label{eq:l4}\\
&\label{eq:l5}
=\E_x\left[e^{-r(\tau\wedge\sigma^*)} V(X_{\tau\wedge\sigma^*})\indicator_{\{\tau<\infty\}}\right]
=
\E_x\left[e^{-r(\tau\wedge\sigma^*)} V(X_{\tau\wedge\sigma^*})\right],
\end{align}
where we pass from \eqref{eq:l2} to \eqref{eq:l3} in view of \eqref{eq:inf} and \eqref{eq:sup},
from line \eqref{eq:l3} to \eqref{eq:l4} integrating with respect to the exponential density of the independent
random variable $e_r$, and the passage from \eqref{eq:l4} to \eqref{eq:l5}
is the same procedures as in \eqref{eq:uu} to \eqref{eq:s};
and, the last equality uses \eqref{eq:V_infinity}. 
Finally, taking into account our hypothesis \eqref{eq:geq},
and the fact that 
$V=G_2$ for $x\leq x_I$, 
we obtain
$$
e^{-r(\tau\wedge\sigma^*)} V(X_{\tau\wedge\sigma^*})
\geq e^{-r\tau}G_1(X_\tau)\indicator_{\{\tau\leq\sigma^*\}}+ e^{-r\sigma^*}G_2(X_{\sigma^*})\indicator_{\{\sigma^*<\tau\}}
.$$
Taking expectations, the inequality \eqref{eq:spmg} is proved, concluding the proof of (i).
The proof of (ii) is analogous.
\end{proof}
\begin{proof}[Proof of Theorem \ref{theorem:1}] 
The proof is an application of Proposition \ref{lemma:dg}, 
where Lemmas
\ref{lemma:1} gives \eqref{eq:mg},
and \ref{lemma:spmg} gives the conditions \eqref{eq:spmg} and \eqref{eq:sbmg}.
\end{proof}
\subsection{Smooth pasting}\label{subsection:smooth}
One of the interesting features of Theorem \ref{theorem:1} is that, although it is a verification result in the sense that it requires a conjecture to identify a solution (a common characteristic in many optimal stopping problems), it does not rely on the smooth pasting condition. This condition is typically used in stopping problems within the diffusion framework \cite{peskir2006}. This distinction is significant because, as illustrated in our examples and well documented in the context of L\'evy processes, the smooth pasting condition does not consistently hold 
(see for instance the discussion in \cite{alili} and the examples in \cite{Kyprianou}).
Due to its similarity, 
we adopt the approach of \cite{Oliu}, 
where the following result is obtained.
\begin{proposition}[Theorem 3.1 in \cite{Oliu}]\label{proposition:sp}
Consider a function of the form
\begin{equation}\label{eq:w}
W(x)=\E_xQ(S).
\end{equation}
Assume that the three following conditions hold:
\vskip1mm\par\noindent
{\rm(i)}
$Q(x)=0$, for $x\leq x_0$ and $Q(x)$ is non-decreasing and in class $C^2[x_0,\infty)$,
\vskip1mm\par\noindent
{\rm(ii)} there exist $A>0$ and $\alpha>0$ s.t. 
$|Q''(x)|\leq A e^{\alpha x}$,
\vskip1mm\par\noindent
{\rm(iii)} for $\alpha>0$ above, it holds
$
\E e^{\alpha X_1}<e^r.
$
\vskip1mm\par\noindent
Then
\begin{align}
\label{eq:atom}
W'(x_0+)-W'(x_0-)&=Q'(x_0+)\P(S=0),\\
W'(x+)-W'(x-)&=0,\text{ for $x>x_0$}. \label{eq:atom2}
\end{align}
\end{proposition}
To apply this result in our situation, observe that
the function $V(x)$ is the sum of two functions of the form
\eqref{eq:w}, because 
$\E_x Q_I(I)=\E_{-x}Q_I(-\hat{S})$ 
where $\hat{S}$ is the supremum of
the dual process $\hat{X}=-X$. 
In conclusion, 
if the averaging functions $Q_I,\ Q_S$ meet conditions (i) to (iii), and the payoffs $G_1,G_2$ 
are differentiable, we will have smooth pasting
at $x_I$ if and only if the condition $\P(I=0)=0$ holds, 
and smooth pasting
at $x_S$
if and only if the condition $\P(S=0)=0$  holds. 
This will be discussed more in detail in the examples of next section.

\subsection{Integrability}
To end the section, we provide the following result in order to check conditions \eqref{eq:ic} and \eqref{eq:limit}. Observe that, unless the functions $G_i$ are bounded, condition $r>0$ is strictly necessary for \eqref{eq:ic} to hold true.
\begin{proposition} \label{proposition:integrability}
Consider a L\'evy process $X$,
a discount rate $r>0$,  a function $G\colon\R\to\R$, 
and assume that there exist positive constants $A,B,\alpha$ such that
\begin{equation}\label{eq:cosh}
|G(x)|\leq A+B\cosh(\alpha x),    
\end{equation}
and    
\begin{equation}\label{eq:alpha}
    \E e^{\alpha X_1}< e^r\qquad\text{and}\qquad \E e^{-\alpha X_1}< e^r.
\end{equation}
Then, the function $G$ satisfies conditions \eqref{eq:ic} and \eqref{eq:limit}.
\end{proposition}
\begin{proof} 
In view of \eqref{eq:alpha}, the characteristic exponent $\Psi(z)$ has an analytic extension to the strip 
$\Re(z)\in[-\alpha,\alpha]$ in the complex plane (see \cite[Thm. 25.17]{sato}). 
From this follows that $\E|X_1|<\infty$
and that $\E X_1=\Psi'(0)$.
Consider the L\'evy process $\{\bar X_t=\alpha X_t-rt\colon t\geq 0\}$.
Its characteristic exponent $\bar\Psi(z)$
satisfies $\bar\Psi(1)=\Psi(\alpha)-r<0$ (the first statement in \eqref{eq:alpha}).
From this follows $\E\bar X_1<0$, meaning that $\lim_{t\to\infty} (\alpha X_t-rt)=-\infty$.
A similar reasoning based on the second statement in \eqref{eq:alpha} implies 
$\lim_{t\to\infty} (-\alpha X_t-rt)=-\infty$. In view of \eqref{eq:cosh}, condition 
\eqref{eq:limit} holds.

In order to see  now condition \eqref{eq:ic}
we use \cite[Lemma 1]{mordecki} that can be stated 
for the L\'evy process $\{\alpha X_t-rt\colon t\geq 0\}$ without discount,
as the equivalence of the following two statements:
\begin{align}
\E(e^{\sup_{t\geq 0}(\alpha X_t-rt)})&<\infty,\label{eq:(a)}\\
\E(e^{\alpha X_1-r})&<1. \label{eq:(b)}
\end{align}
Furthermore
\begin{equation}\label{eq:bound}
    \sup_{t\geq 0}e^{-rt}|G(X_t)|\leq A+\frac{B}2e^{\sup_{t\geq 0}(\alpha X_t-rt)}+
\frac{B}2e^{\sup_{t\geq 0}(-\alpha X_t-rt)}.
\end{equation}
As condition \eqref{eq:(b)}  is equivalent to the first statement in \eqref{eq:alpha}
(the same for $-\alpha$) in view of condition \eqref{eq:(a)} and \eqref{eq:bound},
the statement \eqref{eq:ic} follows.
\end{proof}

\section{Three processes with affine payoff}
\label{section:applications}

In this section we  consider three examples that are similar from an analytic point of view: Brownian motion with drift, Cram\'er-Lundberg process with exponential claims and Compound Poisson process with double sided exponential jumps. 
These examples share the structure of characteristic function for the infimum and the supremum.
%
%
More precisely, we assume that the infimum $I$  and the supremum $S$ in \eqref{eq:supinf}
have characteristic functions given respectively by
\begin{align}
\E e^{zI}&=\pi_I+(1-\pi_I)\frac{r_I}{r_I+z},\label{eq:sup-fc}\\
\E e^{zS}&=\pi_S+(1-\pi_S)\frac{r_S}{r_S-z},\label{eq:inf-fc}
\end{align}
where $0\leq \pi_I,\pi_S\leq 1$ are probabilities, and $-r_I<0<r_S$ are the two roots of the equation
$$
\Psi(z)=r.
$$ 
It is important to note that the  Brownian motion with drift is obtained when $\pi_I=\pi_S=0$;
the Cram\'er-Lundberg process is obtained when $\pi_I>0$ and $\pi_S=0$,
and  the Compound Poisson process is obtained when $\pi_I>0$  and $\pi_S>0$. 
Moreover, other processes with rational transform distributed jumps can be hopefully treated with a similar methodology, at the cost or more involved computations (see \cite{lm}).

In view of \eqref{eq:sup-fc} and \eqref{eq:inf-fc}, the random variables $-I$ and $S$ have (possibly) defective exponential distributions
with respective parameters $r_I$ and $r_S$,  and atoms of size $\pi_I$, $\pi_S$. Then, 
with a slight abuse of notation,
the respective densities can be denoted by
\begin{align}
f_I(x)&=\pi_I\delta_0(x)+(1-\pi_I)r_Ie^{r_Ix},\quad x\leq 0,\label{eq:inf_density}\\
f_S(x)&=\pi_S\delta_0(x)+(1-\pi_S)r_Se^{-r_Sx},\quad x\geq 0,\label{eq:sup_density}
\end{align}
where $\delta_0(x)$ denotes the Dirac mass measure at $x=0$.

For simplicity to obtain tractable solutions, we assume
\begin{equation}
\label{eq:g12}
G_1(x)=x-\delta\qquad\text{and}\qquad G_2(x)=x+\delta,
\end{equation}
for some $\delta>0$.

Observe now that the three process considered above (Brownian motion with drift, Cram\'er-Lundberg process with exponential claims and Compound Poisson process with double sided exponential jumps)
have a characteristic exponent that is analytic in a certain strip $-\alpha\leq\Re(z)\leq\alpha$,
and, for that $\alpha>0$ there exist positive constants $A,B$ such that
$
|G_i(x)|\leq A+B\cosh(\alpha x),\ (i=1,2).
$
This allows to apply Proposition \ref{proposition:integrability} 
to verify the conditions of Theorem \ref{theorem:1} in the considered examples.

For simplicity in notation, introduce the  constants $E_I, F_I, G_I, E_S, F_S, G_S$ that depend only on the parameters of the problem
\begin{align}
E_I&= -\E I=\dfrac{1-\pi_I}{r_I}>0, &E_S&= \E S=\dfrac{1-\pi_S}{r_S}>0,\label{eq:e12}\\
F_I&={1\over \E e^{-r_IS}}=\dfrac{r_I+r_S}{r_I+\pi_I r_S}>1, & F_S&={1\over \E e^{r_SI}}=\dfrac{r_I+r_S}{\pi_S r_I+r_S}>1,\label{eq:f12}\\
G_I&=F_I-1=\frac{(1-\pi_I)r_S}{r_I+\pi_Ir_S}>0, 
&G_S&=F_S-1=\frac{(1-\pi_S)r_I}{\pi_S r_I+r_S}>0.\label{eq:gf12}
\end{align}

\begin{theorem} \label{teo:ejtriple}
Under the assumptions \eqref{eq:inf_density} and \eqref{eq:sup_density}
for the densities of the respective infimum and supremum of the process $X$, and
for the linear payoffs $G_1(x)=x-\delta,$ $G_2(x)=x+\delta,$
for some $\delta>0$,
the DG of Definition \ref{def:dg} has optimal thresholds given by
\begin{align}
x_I&=-\delta-E_I+F_I\frac{E_Se^{r_Iu}-E_IG_S}{e^{(r_I+r_S)u}-G_IG_S},\label{eq:x1}\\
x_S&=\delta+E_S+F_S\frac{-E_Ie^{r_Su}+E_SG_I}{e^{(r_I+r_S)u}-G_IG_S},\label{eq:x2}
\end{align}
where $u$ is the unique solution of  
\begin{equation}
    u-2\delta-E_I-E_S
=\frac{-E_IF_Se^{r_Su}-E_SF_Ie^{r_Iu}+2E_SF_SG_I}{e^{(r_I+r_S)u}-G_IG_S}.
\label{eq:u}
\end{equation}
Moreover, the value function is given by
 \begin{equation}\label{eq:value}
V(x)=
\begin{cases}
x+\delta, & \text{if $x\leq x_I$},\\    
A_I e^{-r_I(x-x_S)}+A_S e^{r_S(x-x_I)},& \text{if $x_I\leq x\leq x_S$},\\
x-\delta, & \text{if $x\geq x_S$,}
\end{cases}
\end{equation}
with
\begin{equation}\label{eq:a12}
A_I=\frac{-E_Ie^{r_Su}+E_SG_I}{e^{(r_I+r_S)u}-G_IG_S},
\quad
 A_S=\frac{E_Se^{r_Iu}-E_IG_S}{e^{(r_I+r_S)u}-G_IG_S}.
\end{equation}
\end{theorem}

\begin{proof} 


In order to find functions $Q_I(x)$ and $Q_S(x)$ s.t. \eqref{eq:equal} holds,
we can compute
$\E_xQ_I(I)$ for $x\leq x_I$
and 
$\E_xQ_S(S)$ for $x\geq x_S$ 
   (using  \eqref{eq:inf_density} and  \eqref{eq:sup_density})
\begin{align}
q_I(x)=\E_xQ_I(I)&=\int_{(-\infty,0]}Q_I(x+y)f_I(y)dy\notag\\
&=\int_{(-\infty,0]}
\left({\pi_I}\delta_0(y)+(1-\pi_I)r_Ie^{r_Iy}\right) Q_I(x+y)dy\notag\\
&=\int_{-\infty}^{x\wedge x_I}
\left({\pi_I}\delta_0(z-x)+(1-\pi_I)r_Ie^{r_I(z-x)}\right)Q_I(z)dz\notag\\ 
&={\pi_I}Q_I(x)+\left(1-{\pi_I}\right)e^{-r_Ix}\int_{-\infty}^{x\wedge x_I} r_Ie^{r_Iz}Q_I(z)dz.
\label{q1}
\end{align}
Analogously 
\begin{equation}
q_S(x)=\E_xQ_S(S)
={\pi_S}Q_S(x)+\left(1-{\pi_S}\right)e^{r_Sx}\int_{x\vee x_S}^{+\infty} r_Se^{-r_Sz}Q_S(z)dz.
\label{q2}
\end{equation}
For convenience, we introduce the notation
\begin{align}
\relax A_I&=A_I(x_I,x_S)=(1-\pi_I)e^{-r_Ix_S}\int_{-\infty}^{x_I}Q_I(z) r_Ie^{r_Iz} dz,\label{A1}
\\ 
\relax A_S&=A_S(x_I,x_S)=(1-\pi_S)e^{r_Sx_I}\int_{x_S}^{+\infty}Q_S(z)r_Se^{-r_Sz}dz.\label{A2}
\end{align}
For the solution, we impose the condition \eqref{eq:equal} with the payoff functions of the problem. 
We first impose the continuity conditions at the optimal thresholds $x_I$ and $x_S$. We obtain the system 
\begin{equation}\label{eq:continuity}
\begin{cases}
e^{r_Iu}\relax  A_I+\relax  A_S=x_I+\delta,\\
\relax  A_I+e^{r_Su}\relax  A_S=x_S-\delta,
\end{cases}
\end{equation}
where we denoted 
$$
u=x_S-x_I.
$$
As we have jumps, the boundary conditions necessarily comprises the whole half lines 
$x\leq x_I$ 
and 
$x\geq x_S$, instead of only the boundary point.
This corresponds to the fact that the process can jump outside the interval $[x_I,x_S]$.
Accordingly we impose equation 
 \eqref{eq:equal} for $x\leq x_I$, that states
\begin{equation}\label{eq:ei1}
{\pi_I}Q_I(x)+\left(1-{\pi_I}\right)e^{-r_Ix}\int_{-\infty}^x r_Ie^{r_Iz}Q_I(z)dz+
e^{r_S(x-x_I)}\relax  A_S=x+\delta.
\end{equation}
The  relationship \eqref{eq:ei1} is an ordinary differential equation to find $Q_I$. 
If we multiply by $e^{r_Ix}$ both sides, assume enough regularity, and take derivatives, we have
\begin{align*}
{\pi_I}\left(r_Ie^{r_Ix}Q_I(x)+e^{r_Ix}Q_I'(x)\right)+& \left(1-{\pi_I}\right)r_Ie^{r_Ix}Q_I(x)+\\
&(r_I+r_S)e^{r_Ix}e^{r_S(x-x_I)}\relax  A_S
=r_I(x+\delta)e^{r_Ix}+e^{r_Ix}.
\end{align*}
Therefore, the function $Q_I(x)$ satisfies the differential equation
\begin{equation*}
{\pi_I\over r_I} Q'_I(x)+Q_I(x)=x+\delta+\frac{1}{r_I}-\left(1+\frac{r_S}{r_I}\right)\relax  A_Se^{r_S(x-x_I)}.
\end{equation*}
In  the case that $\pi_I=0$ we directly obtain the candidate $Q_I$ to use in the verification Theorem \ref{theorem:1}. When $\pi_I>0$,   we assume that $Q_I(x)\sim x$ when $x\to -\infty$, in order to find a candidate.
Solving  the differential equation according to this condition, 
with the introduced above notation, we find the function 
\begin{equation}
Q_I(x)=x+\delta+E_I-F_IA_Se^{r_S(x-x_I)},\quad{x\leq x_I}.
\label{eq:solP2}
\end{equation}
Similarly, for $x\geq x_S$, we obtain the differential equation
\begin{equation*}
{\pi_S}Q_S(x)+\left(1-{\pi_S}\right)e^{r_Sx}\int_x^{+\infty} r_Se^{-r_Sz}Q_S(z)dz+
e^{-r_I(x-x_{S})}\relax  A_I=x-\delta,
\end{equation*}
 with  solution 
\begin{equation}
Q_S(x)=x-\delta-E_S-F_SA_Ie^{-r_I(x-x_I)},\quad{x\geq x_S}.
\label{eq:solF2}
\end{equation}
Using now the functions \eqref{eq:solP2} and \eqref{eq:solF2}, 
we impose conditions $Q_I(x_I)=0$ and $Q_S(x_S)=0$ required in Theorem \ref{theorem:1},
to get
\begin{equation}\label{eq:boundary}
\begin{cases}
x_I+\delta+E_I-F_I\relax {A_S}=0,\\
x_S-\delta-E_S-F_S\relax {A_I}=0.
\end{cases}
\end{equation}
To solve the system above,  
we substitute   $x_I-\delta$ and $x_S+\delta$ from  \eqref{eq:continuity} 
obtaining the linear system
\begin{equation*} \label{eq:sistem_A}
\begin{cases}
e^{r_Iu}\relax {A_I}-G_I\relax {A_S}=-E_I,\\
-G_S\relax {A_I}+e^{r_Su}\relax {A_S}=E_S,
\end{cases}
\end{equation*}
that has solutions $A_I$ and $A_S$ given by the equation \eqref{eq:a12}.
We replace $\relax  A_I$ and $\relax  A_S$ in the system \eqref{eq:boundary}, we subtract both equations and  we obtain the equation \eqref{eq:u},
where we used $E_SF_SG_I=E_IF_IG_S$.

The left-hand side in \eqref{eq:u} is a linear increasing function in $u$,
with value at $u=0$ strictly less than $-(E_I+E_S)$, as $\delta>0$. 
The numerator of the right-hand side  (using the constants \eqref{eq:e12}, \eqref{eq:f12} and \eqref{eq:gf12}) results 
$$
-E_IF_Se^{r_Su}-F_IE_Se^{r_Iu}+2E_SF_SG_I\leq 0,
$$ 
which is a decreasing function in $u$
and the denominator of the right-hand side
is an increasing function in $u$. 
These properties show that the right-hand side of \eqref{eq:u} is a negative
increasing function to zero and when $u=0$ satisfies that it is equals $-(E_I+E_S)$. Therefore, we conclude that  \eqref{eq:u} has a unique positive solution for all $\delta>0$. 

Finally, with this value of the root $u$, from \eqref{eq:boundary} we find that the optimal strategies $x_I$ and $X_S$ are  \eqref{eq:x1} and \eqref{eq:x2}.
In order to compute $V(x)$ for $x\in[x_I,x_S]$, we refer to \eqref{eq:ve}. 
Observe that $Q_I(x)=Q_S(x)=0$
within this interval, so, taking into account \eqref{q1}-\eqref{q2} and \eqref{A1}-\eqref{A2}, we obtain  \eqref{eq:value}.
\end{proof}

\subsection{Brownian motion with drift}
We consider that $X=\{X_t\colon t\geq 0\}$ is a Brownian motion with drift,
\begin{equation}\label{eq:bm}
X_t=x+ct+\sigma W_t,\quad t\geq 0,
\end{equation}
where $\{W_t\colon t\geq 0\}$ is a standard Brownian motion.
The characteristic exponent of the process is 
$$
\Psi(z)={\sigma^2\over 2}z^2+cz,
$$ 
therefore, the denominator in the Wiener-Hopf factorization is zero when $\Psi(z)=r$ that has real roots $-r_I<0<r_S$, with
\begin{equation}\label{eq:roots_MB}
r_I=\frac{\sqrt{c^2+2r\sigma^2}+c}{\sigma^2}
\qquad\text{and}\qquad
r_S=
\frac{\sqrt{c^2+2r\sigma^2}-c}{\sigma^2}.
\end{equation}
As $r_Ir_S=2r/\sigma^2$, Wiener-Hopf factorization reads
$$
{r\over r-\Psi(z)}={2r\over\sigma^2(r_I+z)(r_S-z)}={r_I\over r_I+z}{r_S\over r_S-z},
$$
giving densities
\begin{align*}
f_I(x)&=r_Ie^{r_Ix},\quad x\leq 0,\\
f_S(x)&=r_Se^{-r_Sx},\quad x\geq 0,
\end{align*}
i.e. the formulas in \eqref{eq:inf_density} and \eqref{eq:sup_density} with $\pi_I=\pi_S=0$.
In conclusion, applying the results of Theorem \ref{teo:ejtriple}, we obtain the following results.

\begin{corollary}
The DG of Definition \ref{def:dg} for the Brownian motion with drift in \eqref{eq:bm} and  payoff functions $G_1(x)=x-\delta,$ $G_2(x)=x+\delta,$ has an optimal stopping rules  given by hitting times of the levels
$$
\aligned
x_I&=-\delta-\dfrac1{r_I}-\dfrac{\sqrt{c^2+2r\sigma^2}}{r}\dfrac{e^{r_Iu}-1}{e^{(r_I+r_S)u}-1},\\
x_S&=\delta+\dfrac1{r_S}+\dfrac{\sqrt{c^2+2r\sigma^2}}{r}\dfrac{e^{r_Su}-1}{e^{(r_I+r_S)u}-1},
\endaligned
$$
where $r_I,r_S$ are the roots in \eqref{eq:roots_MB},
and $u$ is the unique positive solution of the equation
\begin{equation*}
u-2\delta=\dfrac{\sqrt{c^2+2r\sigma^2}}{r}
\frac{(e^{r_Su}-1)(e^{r_Iu}-1)}{e^{(r_I+r_S)u}-1},
\end{equation*}
and value function \eqref{eq:value}
with
$$
A_I=-\frac1{r_I}\frac{e^{r_Su}-1}{e^{(r_I+r_S)u}-1},
\quad
 A_S=\frac1{r_S}\frac{e^{r_Iu}-1}{e^{(r_I+r_S)u}-1}.
$$
\vskip1mm\par\noindent
Moreover, as a consequence of the discussion in Subsection \ref{subsection:smooth}, in view of the fact that
the infimum and supremum of the process have no atoms, the value function is everywhere differentiable, i.e. we have smooth pasting.
\end{corollary}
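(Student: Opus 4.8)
The plan is to apply the general resolution machinery of Subsection~\ref{section:gr} to the specific Wiener--Hopf factors of Brownian motion with drift, namely the case $\pi_I=\pi_S=0$. First I would recall that for the process in \eqref{eq:bm} the characteristic exponent is $\Psi(z)=\tfrac{\sigma^2}{2}z^2+cz$, so that $\Psi(z)=r$ is a quadratic with roots $-r_I<0<r_S$ given by \eqref{eq:roots}, and that $r_Ir_S=2r/\sigma^2$, which yields the clean factorization ${r}/({r-\Psi(z)})=\tfrac{r_I}{r_I+z}\tfrac{r_S}{r_S-z}$. This identifies the laws of $-I$ and $S$ as pure exponentials of parameters $r_I$ and $r_S$ with no atom, i.e.\ \eqref{eq:inf_density}--\eqref{eq:sup_density} with $\pi_I=\pi_S=0$. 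The integrability and limit conditions \eqref{eq:ic}, \eqref{eq:limit} are checked via the Proposition preceding the subsection, using that $\Psi$ is entire and $G_i(x)=x\pm\delta$ is dominated by $A+B\cosh(\alpha x)$ for suitable $\alpha$ with $\E e^{\pm\alpha X_1}<e^r$ (which holds since $\Psi(\pm\alpha)<r$ for $\alpha$ small).

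Next I would specialize the constants \eqref{eq:e12}--\eqref{eq:gf12}: with $\pi_I=\pi_S=0$ one gets $E_I=1/r_I$, $E_S=1/r_S$, $F_I=F_S=(r_I+r_S)/r_S$ and $(r_I+r_S)/r_I$ respectively wait --- more simply $F_I=(r_I+r_S)/r_I$? Let me just say: substituting $\pi_I=\pi_S=0$ into the displayed formulas gives explicit rational expressions, and the products $G_IG_S$ collapse because $G_I=F_I-1=r_S/r_I$ and $G_S=F_S-1=r_I/r_S$, so $G_IG_S=1$. Plugging these into \eqref{eq:a12}, \eqref{eq:x1}, \eqref{eq:x2} and \eqref{eq:u}, and using $r_I+r_S=2\sqrt{c^2+2r\sigma^2}/\sigma^2$ together with $r_Ir_S=2r/\sigma^2$ (so that $(r_I+r_S)/(r_Ir_S)=\sqrt{c^2+2r\sigma^2}/r$), one recovers exactly the stated formulas for $x_I$, $x_S$, $A_I$, $A_S$ and the fixed-point equation for $u$. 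The existence and uniqueness of the positive root $u$ is inherited verbatim from the general discussion after \eqref{eq:u}. This is mostly bookkeeping; the only mild care needed is tracking that the denominator $e^{(r_I+r_S)u}-G_IG_S$ becomes $e^{(r_I+r_S)u}-1$ and that the various $F$ and $E$ factors combine into the single prefactor $\sqrt{c^2+2r\sigma^2}/r$.

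For the smooth-pasting claim I would invoke the Proposition of Subsection~\ref{subsection:smooth} (Theorem~3.1 of \cite{Oliu}) applied to each summand of $V$. The averaging functions here are $Q_I(x)=x+\delta+E_I-F_IA_Se^{r_S(x-x_I)}$ on $(-\infty,x_I]$ and $Q_S(x)=x-\delta-E_S-F_SA_Ie^{-r_I(x-x_I)}$ on $[x_S,\infty)$; these are $C^2$ (indeed $C^\infty$) on their half-lines, non-decreasing by construction, and their second derivatives are exponentials, hence dominated by $Ae^{\alpha|x|}$ for $\alpha<\min(r_I,r_S)$, while $\E e^{\pm\alpha X_1}<e^r$ holds for such $\alpha$. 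Since the payoffs $G_1,G_2$ are affine, hence differentiable, the discussion there shows smooth pasting holds at $x_I$ iff $\P(I=0)=0$ and at $x_S$ iff $\P(S=0)=0$; but $\P(I=0)=\pi_I=0$ and $\P(S=0)=\pi_S=0$ for Brownian motion with drift, so $V$ is differentiable at both contact points, and trivially elsewhere, hence everywhere differentiable.

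I do not anticipate a genuine obstacle, since everything is a specialization of results already proved; the one place to be careful is the algebraic simplification from \eqref{eq:x1}--\eqref{eq:u} to the Corollary's formulas, in particular verifying that all the $\pi$-dependent constants degenerate correctly at $\pi_I=\pi_S=0$ and that $G_IG_S=1$ so the common denominators take the clean form $e^{(r_I+r_S)u}-1$. If one wanted, one could double-check the $u$-equation and the value function by a direct verification that \eqref{eq:equal} and \eqref{eq:geq} hold with these explicit $Q_I,Q_S$, but this is redundant given Theorem~\ref{theorem:1} and the preceding general Theorem.
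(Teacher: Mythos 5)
Your proposal is correct and is essentially identical to the paper's own argument: the paper proves this corollary simply by noting that $\pi_I=\pi_S=0$ for Brownian motion with drift and specializing Subsection \ref{section:gr}, which is exactly what you do, and your key simplifications ($G_I=r_S/r_I$, $G_S=r_I/r_S$, so $G_IG_S=1$, and $(r_I+r_S)/(r_Ir_S)=\sqrt{c^2+2r\sigma^2}/r$) together with $\P(I=0)=\P(S=0)=0$ for the smooth-pasting claim are the right ingredients. One caveat: carrying the substitution through \eqref{eq:x1}--\eqref{eq:x2} actually yields $x_I=-\delta-1/r_I+\tfrac{\sqrt{c^2+2r\sigma^2}}{r}\tfrac{e^{r_Iu}-1}{e^{(r_I+r_S)u}-1}$ and $x_S=\delta+1/r_S-\tfrac{\sqrt{c^2+2r\sigma^2}}{r}\tfrac{e^{r_Su}-1}{e^{(r_I+r_S)u}-1}$ (signs of the correction terms opposite to those printed in the corollary, but consistent with the quoted numerical values $-1.2426$ and $2.3659$), so rather than ``recovering exactly the stated formulas'' your computation exposes a sign typo in the statement.
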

\begin{remark} The symmetric case is obtained when $c=0$, with roots
$
-r_I=r_S={\sqrt{2r}/\sigma}.
$
Furthermore, in this symmetric case, $A_I=-A_S$, $-x_I=x_S$, and the function $V(x)$ in \eqref{eq:value} presents central symmetry around the origin.
\end{remark}

\subsubsection*{Numerical examples}

To illustrate our results in this case we consider two examples. 
In the first one we choose the standard Brownian motion, with $c=0$ and $\sigma=1$, and the second
example is the Brownian motion with drift the parameters $c=1$ and $\sigma=1$. We consider $r=1$ and $\delta=1$. 
The results obtained are
$x_I =  -x_S =  -1.6955$ 
and
$x_I =-1.2426$
and   
$x_S = 2.3659$ 
respectively.
In Figure \ref{v_bm}, we show the corresponding value functions.
\begin{figure}[ht]
\begin{center}
\begin{tabular}{cc}
\begin{tikzpicture}
\begin{axis}[
       axis lines=middle,
    axis line style=->,
    xmin = -4.5, xmax = 4.5,
    ymin = -4.5, ymax = 4.5,
    xtick distance = 2,
    ytick distance = 2,
    grid = both,
    minor tick num = 1,
    major grid style = {lightgray},
    minor grid style = {lightgray!25},
    width = 0.5\textwidth,
    height = 0.5\textwidth,
        enlargelimits,
    tick label style={font=\tiny}
]
    \addplot[
        domain = -4.5:-1.695513,
        samples = 200,
        smooth,
        thick,
        black]
        {(x)+1};
\addplot[
        domain = -1.695513:1.695513,
        samples = 200,
        smooth,
        thick,
        black,
    ]
{-0.0058*exp(-1.442*(x-1.695513))+0.0058*exp(1.442*(x+1.695513))};
\addplot[
        domain = 1.695513:4.5,
        samples = 200,
        smooth,
        thick,
        black,
    ] {(x)-1};    
     \addplot[
        domain = -4.5:4.5,
        samples = 200,
        thick,
        black,dashed]
        {(x)+1};
     \addplot[
        domain = -4.5:4.5,
        samples = 200,
        thick,
        black,dashed]
        {(x)-1};
        \addplot[
        domain = -4.5:-1.695513,
        samples = 200,
        smooth, thick,
        black,dotted]
        { x+1+0.7071-2*0.005797*exp(1.4142*(x+1.6955))};
        \addplot[
        domain = 1.695513:4.5,
        samples = 200,
        smooth, thick,
        black,dotted]
        {x-1-0.7071+2*0.005797*exp(-1.4142*(x-1.6955))};       
\end{axis}
\end{tikzpicture}
&
\begin{tikzpicture}
\begin{axis} [axis lines=middle,
    axis line style=->,
    xmin = -4.5, xmax = 4.5,
    ymin = -4.5, ymax = 4.5,
    xtick distance = 2,
    ytick distance = 2,
    grid = both,
    minor tick num = 1,
    major grid style = {lightgray},
    minor grid style = {lightgray!25},
    width = 0.5\textwidth,
    height = 0.5\textwidth,
        enlargelimits,
    tick label style={font=\tiny}
]
    \addplot[
        domain = -4.5:-1.2426,
        samples = 200,
        smooth,
        thick,
        black]
        {(x)+1};
\addplot[
        domain = -1.2426:2.3659,
      samples = 200,
     smooth,
    thick,
     black,
   ]
{-0.00002*exp(-2.732*(x-2.3659))+0.0973*exp(0.732*(x+1.2426))};
\addplot[domain = 2.3659:4.5,
        samples = 200,
        smooth,
        thick,
        black,
    ] {(x)-1};    
     \addplot[
        domain = -4.5:4.5,
        samples = 200,
        thick,
        black,dashed]
        {(x)+1};
     \addplot[
        domain = -4.5:4.5,
        samples = 200,
        thick,
        black,dashed]
        {(x)-1};
        \addplot[
        domain = -4.5:-1.2426,
        samples = 200,
        smooth, thick,
        black,dotted]
        { x+1+0.366-1.2679*0.0973*exp(0.7320*(x+1.2426))};
        \addplot[
        domain = 2.3659:4.5,
        samples = 200,
        smooth, thick,
        black,dotted]
        {x-1-1.3660+4.7320*0.00001777*exp(-2.7320*(x-2.3650))};       
\end{axis}
\end{tikzpicture}
\end{tabular}
\end{center}
\caption{ Brownian Motion.  Value function $V(x)$ (solid), $G_1(x)$, $G_2(x)$ (dashed) and $Q_I(x)$, $Q_S(x)$ (dotted). Left: symmetric case.  Right: asymmetric case.
}
\label{v_bm}
\end{figure}

\subsection{Cram\'er-Lundberg process}
We consider the Cram\'er-Lundberg process $X=\{X_t\colon t\geq 0\}$ with exponential jumps, given by 
\begin{equation}\label{eq:cramer}
X_t=x+ct-\sum_{i=1}^{N_t^{(1)}}Y_i^{(1)},
\end{equation}
where $N^{(1)}=\{N_t^{(1)}:t\geq 0\}$
is a  Poisson process with  intensity $\lambda_1$ and
$Y^{(1)}=\{Y_i^{(1)}\colon i\geq 1\}$ is a  sequence of independent identically distributed exponential random variables with parameter $\alpha_{1}$. 
The two processes $N^{(1)}$, and $Y^{(1)}$  are independent. 

 
The characteristic exponent of the process is 
$$
\Psi(z)= cz- \lambda_1 \frac{z}{\alpha_1+z},
$$ 
therefore, the denominator in the Wiener-Hopf factorization is zero when $\Psi(z)=r$ that has two roots $-r_I<0<r_S$, with 
$$
\aligned
r_I&={\sqrt{({c\alpha_1-\lambda_1-r})^2+4cr\alpha_1}+c\alpha_1-\lambda_1-r\over 2c},\\
r_S&={\sqrt{({c\alpha_1-\lambda_1-r})^2+4cr\alpha_1}-({c\alpha_1-\lambda_1-r})\over 2c}.
\endaligned
$$
The Wiener-Hopf factorization to determine the law of $S$ and $I$ is
$$
\frac{r}{r-\Psi(z)}= \left(\pi_I+ (1-\pi_I)\frac{r_I}{r_I+z}\right)
\left(\frac{r_S}{r_S-z}\right),$$
where $\pi_I=r/(r_S c)$, and we used that $r_Ir_S=r\alpha_1/c$.
We observe that the random variables $S$ has  exponential distributions
with parameter $r_S$ 
 and $-I$ has defective exponential distributions
with parameter $r_I$
and an atom at zero of  size $\pi_I$. 
The respective densities are
\begin{align*}
f_I(x)&=\pi_I\delta_0(x)+(1-\pi_I)r_Ie^{r_Ix},\quad x\leq 0,\\
f_S(x)&=r_Se^{-r_Sx},\quad x\geq 0,
\end{align*}
where $\delta_0(x)dx$ denotes the Dirac mass measure at $x=0$. Consequently, 
we obtain  \eqref{eq:inf_density} and \eqref{eq:sup_density}
with $\pi_I>0$ and $\pi_S=0.$
We then obtain the following result.
\begin{corollary}
The DG of Definition \ref{def:dg} for the Cram\'er Lundberg process in \eqref{eq:cramer} and 
payoff functions $G_1(x)=x-\delta,$ $G_2(x)=x+\delta,$  has an optimal stopping rules  given by hitting times of the levels
given by \eqref{eq:x1} and \eqref{eq:x2} with $\pi_S=0$,
where $u$ is the unique solution of  \eqref{eq:u}, and the value function is given in
\eqref{eq:value}.
\vskip1mm\par\noindent
Moreover, as a consequence of the discussion in Subsection \eqref{subsection:smooth}, 
in view of the fact that only the infimum has an atom,
we have smooth pasting at $x_S$ but the value function at $x_I$ is not differentiable.
The jump of the derivative can be computed by  \eqref{eq:atom}.
\end{corollary}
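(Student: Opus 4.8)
The plan is to obtain this corollary as a direct specialization of the theorem closing Subsection \ref{section:gr}, together with the smooth-pasting discussion of Subsection \ref{subsection:smooth}; the only substantive work is to check that the Cram\'er--Lundberg process with the linear payoffs \eqref{eq:g12} fits that template. First I would dispatch the standing hypotheses: the characteristic exponent $\Psi(z)=cz-\lambda_1 z/(\alpha_1+z)$ extends analytically to a strip $|\Re z|\le\alpha$ for any $0<\alpha<\alpha_1$, and I would fix such an $\alpha$ small enough that $\E e^{\alpha X_1}<e^r$ and $\E e^{-\alpha X_1}<e^r$ both hold, which is possible since each of these expectations tends to $1<e^r$ as $\alpha\downarrow0$. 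Since $|G_i(x)|=|x\pm\delta|\le A+B\cosh(\alpha x)$ for suitable constants $A,B$, the Proposition with hypotheses \eqref{eq:cosh}--\eqref{eq:alpha} then gives \eqref{eq:ic}--\eqref{eq:limit}, so the game is well posed and Theorem \ref{theorem:1} applies (as already observed for the three examples at the end of Section \ref{section:applications}).

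Next I would record the Wiener--Hopf split. Solving $\Psi(z)=r$ yields the real roots $-r_I<0<r_S$ stated before the corollary, and the factorization $r/(r-\Psi(z))=\bigl(\pi_I+(1-\pi_I)r_I/(r_I+z)\bigr)\bigl(r_S/(r_S-z)\bigr)$ identifies $S$ as a genuine $\mathrm{Exp}(r_S)$ variable and $-I$ as a defective $\mathrm{Exp}(r_I)$ variable carrying an atom $\pi_I=r/(r_Sc)\in(0,1)$ at the origin, the strict inequalities following from $\Psi(r_S)=r$ (which forces $cr_S=r+\lambda_1 r_S/(\alpha_1+r_S)>r>0$). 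Hence the densities \eqref{eq:inf_density}--\eqref{eq:sup_density} hold with $\pi_S=0<\pi_I$, and the theorem of Subsection \ref{section:gr} applies verbatim: it produces the averaging functions \eqref{eq:solP2}--\eqref{eq:solF2}, the equation \eqref{eq:u} for $u=x_S-x_I$ (whose unique positive root exists by the monotonicity argument given there), the thresholds \eqref{eq:x1}--\eqref{eq:x2} with $\pi_S=0$, and the value function \eqref{eq:value}. Theorem \ref{theorem:1} then certifies that this $V$ is the game value and that the hitting times \eqref{eq:tau0} form a Nash equilibrium, which is the first assertion.

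Finally, for the smooth-pasting claim I would invoke Subsection \ref{subsection:smooth}. Each of $Q_S$ in \eqref{eq:solF2} and $Q_I$ in \eqref{eq:solP2} is a linear function plus a single exponential, hence $C^\infty$ and, by the construction, non-decreasing on $[x_S,\infty)$ respectively on $(-\infty,x_I]$, with $|Q_S''|$ and $|Q_I''|$ bounded there and therefore dominated by $Ae^{\alpha x}$ for the $\alpha$ of the first step; condition (iii) of the cited Proposition (Theorem 3.1 in \cite{Oliu}) is precisely $\E e^{\alpha X_1}<e^r$ for $Q_S$ and, through the dual reading $\E_xQ_I(I)=\E_{-x}Q_I(-\hat S)$ with $\hat X=-X$, $\E e^{-\alpha X_1}<e^r$ for $Q_I$, both secured above. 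Since $S$ has no atom at $0$ and $G_1$ is differentiable, \eqref{eq:atom} gives $V'(x_S+)=V'(x_S-)$, i.e.\ smooth pasting at $x_S$; since $\P(I=0)=\pi_I>0$, the same proposition applied to the $\E_xQ_I(I)$ term yields a nonzero jump at $x_I$ which, using $Q_I'(x)=1-r_SF_IA_Se^{r_S(x-x_I)}$ and the boundary relation $F_IA_S=x_I+\delta+E_I$ from \eqref{eq:boundary}, equals (up to sign) $\pi_I\bigl(1-r_S(x_I+\delta+E_I)\bigr)\neq0$, so $V$ is not differentiable at $x_I$, and the jump is as displayed in \eqref{eq:atom}.

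The step I expect to be the \emph{main obstacle} is the dual-process bookkeeping in the last paragraph: rewriting $\E_xQ_I(I)$ in the exact form $\E_{\cdot}Q(\sup)$ demanded by the smooth-pasting Proposition, checking that the monotonicity and the $C^2$ and exponential-bound conditions survive the reflection $x\mapsto-x$, fixing the correct orientation of \eqref{eq:atom} under this reflection, and verifying $1-r_SF_IA_S\neq0$ so that the jump at $x_I$ is genuinely nonzero. By contrast, the first two steps amount to quoting earlier results, and the remaining manipulations with the explicit exponential formulas are routine.
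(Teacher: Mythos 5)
Your proposal is correct and follows essentially the same route as the paper: identify the Wiener--Hopf factors of the Cram\'er--Lundberg process, read off $\pi_I=r/(r_Sc)>0$ and $\pi_S=0$, specialize the general resolution of Subsection \ref{section:gr}, and then invoke the smooth-pasting discussion of Subsection \ref{subsection:smooth} to get differentiability at $x_S$ and a derivative jump at $x_I$. You are in fact somewhat more explicit than the paper on the well-posedness conditions and on the point that non-differentiability at $x_I$ additionally requires $Q_I'(x_I-)=1-r_SF_IA_S\neq0$ (the paper asserts it directly from the presence of the atom), but the substance and structure of the argument coincide.
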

\subsubsection*{Numerical examples}

To illustrate our results in this process we consider  the Cram\'er-Lundberg  with $c=1$, $\lambda=1$ and $\alpha_1=1$.  We consider $r=1$ and $\delta=1$. 
The critical thresholds are
$x_I =-1.6127$
and 
$x_S  =1.4931$.

In Figure we show the corresponding value functions.

\begin{figure}[ht]
\label{lkf_v_cl}
\begin{center}
\begin{tikzpicture}
\begin{axis}[
    axis lines=middle,
    axis line style=->,
    xmin = -4.5, xmax = 4.5,
    ymin = -4.5, ymax = 4.5,
    xtick distance = 2,
    ytick distance = 2,
    grid = both,
    minor tick num = 1,
    major grid style = {lightgray},
    minor grid style = {lightgray!25},
    width = 0.5\textwidth,
    height = 0.5\textwidth,    enlargelimits,
    tick label style={font=\tiny}
]
    \addplot[
        domain = -4.5:-1.6127,
        samples = 200,
        smooth,
        thick,
        black]
        {(x)+1};
\addplot[
        domain = -1.6127:1.4931,
        samples = 200,
        smooth,
        thick,
        black,
    ] {-0.0904*exp(-0.618*(x-1.4931))+0.0038*exp(1.618*(x+1.6127))};
\addplot[
        domain = 1.4931:4.5,
        samples = 200,
        smooth,
        thick,
        black,
    ] {(x)-1};
     \addplot[
        domain = -4.5:4.5,
        samples = 200,
        smooth, thick,
        black,dashed]
        {(x)+1};
     \addplot[
        domain = -4.5:4.5,
        samples = 200,
        smooth, thick,
        black,dashed]
        {(x)-1};
        \addplot[
        domain = -4.5:-1.6127,
        samples = 200,
        smooth, thick,
        black,dotted]
        {x+1+0.6180-1.3819*0.003833*exp(1.6180*(x+1.6127))};
        \addplot[
        domain =  1.4931:4.5,
        samples = 200,
        smooth, thick,
        black,dotted]
        {x-1-0.6180+1.3819*0.0904*exp(-0.6180*(x-1.4931))};       
\end{axis}
\end{tikzpicture}
\end{center}
\caption
{Cram\'er-Lundberg process. Value function $V(x)$ (solid), $G_1(x)$, $G_2(x)$ (dashed) and $Q_I(x)$, $Q_S(x)$ (dotted).}
\end{figure}

\subsection{Compound Poisson process}
We consider the compound Poisson process $X=\{X_t\colon t\geq 0\}$ with double-sided exponential jumps, given by 
\begin{equation}\label{cpp}
X_t=x-\sum_{i=1}^{N_t^{(1)}}Y_i^{(1)}+\sum_{i=1}^{N_t^{(2)}}Y_i^{(2)},
\end{equation}
where $N^{(1)}=\{N_t^{(1)}\colon t\geq 0\}$
and $N^{(2)}=\{N_t^{(2)}\colon t\geq 0\}$ are two Poisson process 
 with respective positive intensities $\lambda_1,$ $\lambda_2$,
 the two sequences 
$Y^{(1)}=\{Y_i^{(1)}\colon i\geq 1\}$ and $Y^{(2)}=\{Y_i^{(2)}\colon i\geq 1\}$ are of independent identically distributed exponential random variables with respective positive parameters $\alpha_1$, $\alpha_2$. 
The four processes  $N^{(1)}$, $N^{(2)}$, $Y^{(1)}$, $Y^{(2)}$  are independent. 
 
The characteristic exponent of the process $X$ is given by 
$$
\Psi(z)=-\lambda_1\frac{z}{\alpha_1+z}+\lambda_2\frac{z}{\alpha_2-z},
$$ 
therefore the denominator in the Wiener-Hopf factorization $\Psi(z)=r$ that has two roots $-r_I$, $r_S$ are  the solutions of the equation
$$
(r+\lambda_1+\lambda_2)z^2+(\alpha_1(\lambda_2+r)-\alpha_2(\lambda_1+r))z-r\alpha_1\alpha_2=0,
$$
that satisfy
$$
-\alpha_1<-r_I<0<r_S<\alpha_2.
$$
The Wiener-Hopf factorization to determine the law of $S$ and $I$ is
\begin{align*}
{r\over r-\psi(z)}
&={r(z-\alpha_2)(z-\alpha_1)\over (r+\lambda+\mu)(z-r_S)(z-r_I)}
={r_Ir_S(\alpha_1+z)(\alpha_2-z)\over \alpha_1\alpha_2(r_I+z)(r_S-z)}\\
&=\left({r_I\over\alpha_1}+{\alpha_1-r_I\over\alpha_1}{r_I\over r_I+z}\right)
\left({r_S\over\alpha_2}+{\alpha_2-r_S\over\alpha_2}{r_S\over r_S-z}\right).
\end{align*}
In conclusion, due to the uniqueness of the factorization (see Thm. 5(ii) Ch. VI of \cite{Bertoin}),
we obtain
that the random variables 
$-I$  
and 
$S$
have (strictly) defective exponential distributions
with parameters $r_I$ and $r_S$,
and atoms at zero of respective sizes $\pi_I=r_I/\alpha_1$ and $\pi_S=r_S/\alpha_2$,
corresponding to  \eqref{eq:sup-fc} and \eqref{eq:inf-fc}, giving then the following result.

\begin{corollary}
The DG of Definition \ref{def:dg} for the compound Poisson process with double sided exponential jumps in \eqref{cpp} and payoff functions $G_1(x)=x-\delta,$ $G_2(x)=x+\delta,$  has optimal stopping rules  given by hitting times of the levels
 given by \eqref{eq:x1} and \eqref{eq:x2} with $\pi_I=r_I/\alpha_1$ and $\pi_S=r_S/\alpha_2$, where $u$ is the unique solution of  \eqref{eq:u}, and the value function is given in \eqref{eq:value}.
\vskip1mm\par\noindent
Moreover, as a consequence of the discussion in Subsection \eqref{subsection:smooth}, 
in view of the fact that both 
the infimum and the supremum have atoms,
 the value function at $x_S$ and $x_I$ is not differentiable.
The jump of the derivative at these points can be computed by  \eqref{eq:atom}.
\end{corollary}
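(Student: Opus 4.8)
The plan is to obtain the Corollary as a direct specialisation of the general-resolution Theorem of Subsection~\ref{section:gr}, the only process-specific input being the identification of the laws of $I$ and $S$ for the process \eqref{cpp}, together with the smooth-pasting discussion of Subsection~\ref{subsection:smooth}. First I would record that $\Psi(z)=-\lambda_1 z/(\alpha_1+z)+\lambda_2 z/(\alpha_2-z)$ is rational with poles only at $-\alpha_1$ and $\alpha_2$, so the equation $\Psi(z)=r$ reduces to the displayed quadratic, whose two roots $-r_I<0<r_S$ interlace the poles: $-\alpha_1<-r_I<0<r_S<\alpha_2$. This interlacing is precisely what makes each of the two partial-fraction factors of $r/(r-\Psi(z))$ a genuine convex combination $\pi+(1-\pi)\,r_{\bullet}/(r_{\bullet}\pm z)$ with $\pi_I=r_I/\alpha_1\in(0,1)$ and $\pi_S=r_S/\alpha_2\in(0,1)$; by uniqueness of the Wiener--Hopf factorisation (Thm.~5(ii), Ch.~VI of \cite{Bertoin}) this forces $-I$ and $S$ to be defective exponentials with parameters $r_I,r_S$ and atoms $\pi_I,\pi_S$ at the origin, i.e.\ the densities \eqref{eq:inf_density}--\eqref{eq:sup_density} with both $\pi_I>0$ and $\pi_S>0$.

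Next I would dispatch the standing hypotheses \eqref{eq:ic}--\eqref{eq:limit}. Since $\Psi$ is analytic on the strip $-\alpha_1<\Re z<\alpha_2$ and $\Psi(0)=0<r$, one may pick $\alpha\in(0,\min(\alpha_1,\alpha_2))$ small enough that $\Psi(\pm\alpha)<r$, that is $\E e^{\alpha X_1}<e^r$ and $\E e^{-\alpha X_1}<e^r$; as the linear payoffs \eqref{eq:g12} trivially satisfy $|G_i(x)|\le A+B\cosh(\alpha x)$, the $\cosh$-bound Proposition preceding Subsection~\ref{section:gr} yields \eqref{eq:ic} and \eqref{eq:limit}. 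With \eqref{eq:inf_density}--\eqref{eq:sup_density} in hand, the general-resolution Theorem applies verbatim: there is a unique positive root $u$ of \eqref{eq:u}, the thresholds are $x_I,x_S$ of \eqref{eq:x1}--\eqref{eq:x2} evaluated at $\pi_I=r_I/\alpha_1$, $\pi_S=r_S/\alpha_2$, the optimal rules are those of \eqref{eq:tau0}, and $V$ is given by \eqref{eq:value}.

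For the non-differentiability claim I would apply the Proposition quoted from \cite{Oliu} to each summand of $V=q_I+q_S$, with $q_S(x)=\E_xQ_S(S)$ directly and, via the duality identity $\E_xQ_I(I)=\E_{-x}Q_I(-\hat S)$, with $q_I$. The explicit averaging functions \eqref{eq:solP2}, \eqref{eq:solF2} are real-analytic up to their respective contact points and, on the relevant half-lines, their second derivatives are decaying exponentials, hence bounded; so hypothesis~(ii) of that Proposition holds with arbitrarily small $\alpha>0$, hypothesis~(iii), $\E e^{\pm\alpha X_1}<e^r$, holds for such $\alpha$ by the analyticity argument already used, and hypothesis~(i) is immediate (after the reflection $s\mapsto -s$ in the case of $Q_I$). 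Since $q_I$ is $C^1$ across $x_S$ and $q_S$ is $C^1$ across $x_I$, formulas \eqref{eq:atom}--\eqref{eq:atom2} give $V'(x_S+)-V'(x_S-)=Q_S'(x_S+)\,\pi_S$ and, symmetrically, a jump of $V'$ at $x_I$ equal to $-Q_I'(x_I-)\,\pi_I$; because $\pi_I,\pi_S>0$ and the one-sided derivatives $Q_S'(x_S+),Q_I'(x_I-)$ are strictly positive, both jumps are nonzero, so $V$ is differentiable at neither threshold, and their exact values are those of \eqref{eq:atom}.

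The main, and fairly mild, obstacle is the identification of the atom sizes $\pi_I=r_I/\alpha_1$, $\pi_S=r_S/\alpha_2$: these are not read off from a direct computation of the extrema but from matching the two explicit partial-fraction factors of $r/(r-\Psi(z))$ against the unique Wiener--Hopf factorisation, which is why pinning the roots strictly inside $(-\alpha_1,\alpha_2)$ is essential. Everything else is a routine instantiation of results already established.
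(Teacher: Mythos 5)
Your proposal follows essentially the same route as the paper: identify the two roots $-r_I<0<r_S$ of $\Psi(z)=r$ interlacing the poles $-\alpha_1,\alpha_2$, match the partial-fraction form of $r/(r-\Psi(z))$ against the unique Wiener--Hopf factorisation to read off the defective exponential laws of $-I$ and $S$ with atoms $\pi_I=r_I/\alpha_1$, $\pi_S=r_S/\alpha_2$, then instantiate the general-resolution theorem and the smooth-pasting discussion of Subsection~\ref{subsection:smooth}. Your verification of conditions \eqref{eq:ic}--\eqref{eq:limit} and of hypotheses (i)--(iii) of the quoted proposition (including the correct sign $-Q_I'(x_I-)\pi_I$ for the jump at $x_I$ via the duality with $\hat S$) is in fact somewhat more explicit than what the paper records.
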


\subsubsection*{Numerical examples}
To illustrate our results we consider two examples. In the first one we choose a symmetric case with $(\alpha_1 , \lambda_1 , \alpha_2 , \lambda_2 ) = (1, 1, 1, 1)$ and in the second example an asymmetric case
with $(\alpha_1 , \lambda_1 , \alpha_2 , \lambda_2 ) = (1,3, 3, 1)$. We consider $r=1$ and $\delta=1$. 
The results obtained are
$x_I= - 1.5901$, $x_S =1.5901$   and $x_I = - 3.7750$,   $x_S = 0.0834$ respectively.

In Figure \ref{v_cp} we show the corresponding value functions $V(x)$.
In both cases we observe that in $x_I$ and $x_S$ the value function is not differentiable.


\begin{figure}[ht]
\label{v_cp}
\begin{center}
\begin{tabular}{cc}
\begin{tikzpicture}
\begin{axis}[
    axis lines=middle,
    axis line style=->,
    xmin = -4.5, xmax = 4.5,
    ymin = -4.5, ymax = 4.5,
    xtick distance = 2,
    ytick distance = 2,
    grid = both,
    minor tick num = 1,
    major grid style = {lightgray},
    minor grid style = {lightgray!25},
    width = 0.5\textwidth,
    height = 0.5\textwidth,    enlargelimits,
    tick label style={font=\tiny}
]
    \addplot[
        domain = -4.5:-1.59,
        samples = 200,
        smooth,
        thick,
        black]
        {(x)+1};
\addplot[
        domain = -1.59:1.59,
        samples = 200,
        smooth,
        thick,
        black,
    ] {-0.112*exp(-0.577*(x-1.59))+0.112*exp(0.577*(x+1.59))};
\addplot[
        domain = 1.59:4.5,
        samples = 200,
        smooth,
        thick,
        black,
    ] {(x)-1};
     \addplot[
        domain = -4.5:4.5,
        samples = 200,
        smooth, thick,
        black, dashed]
        {(x)+1};
     \addplot[
        domain = -4.5:4.5,
        samples = 200,
        smooth, thick,
        black,dashed]
        {(x)-1};
        \addplot[
        domain =  -4.5:-1.59,
        samples = 200,
        smooth, thick,
        black,dotted]
        {x+1+0.7322-1.2679*0.1119*exp(0.5773*(x+1.5901))};
        \addplot[
        domain = 1.59:4.5,
        samples = 200,
        smooth, thick,
        black,dotted]
        {x-1-0.7322+1.2679*0.1119*exp(-0.5773*(x-1.5901))};
\end{axis}
\end{tikzpicture}
&
\begin{tikzpicture}
\begin{axis}[
       axis lines=middle,
    axis line style=->,
    xmin = -4.5, xmax = 4.5,
    ymin = -4.5, ymax = 4.5,
    xtick distance = 2,
    ytick distance = 2,
    grid = both,
    minor tick num = 1,
    major grid style = {lightgray},
    minor grid style = {lightgray!25},
    width = 0.5\textwidth,
    height = 0.5\textwidth,
    enlargelimits,
    tick label style={font=\tiny}]
    \addplot[
        domain = -4.5:-3.77,
        samples = 200,
        smooth,
        thick,
        black]
        {(x)+1};
\addplot[
        domain = -3.77:0.08,
        samples = 200,
        smooth,
        thick,
        black,
    ]
{-0.998*exp(-0.265*(x-0.08))+0.00001312*exp(2.265*(x+3.77))};
\addplot[
        domain = 0.0833:4.5,
        samples = 200,
        smooth,
        thick,
        black,
    ] {(x)-1};    
     \addplot[
        domain = -4.5:4.5,
        samples = 200,
        smooth, thick,
        black,dashed]
        {(x)+1};
     \addplot[
        domain = -4.5:4.5,
        samples = 200,
        smooth, thick,
        black,dashed]
        {(x)-1};
        \addplot[
        domain = -4.5:-3.77,
        samples = 200,
        smooth, thick,
        black,dotted]
        { x+1+2.7749-2.9250*0.0000131*exp(2.2649*(x+3.775))};
        \addplot[
        domain = 0.0833:4.5,
        samples = 200,
        smooth, thick,
        black,dotted]
        {x-1-0.1081+1.0263*0.9985*exp(-0.2649*(x-0.0834))};       
\end{axis}
\end{tikzpicture}
\end{tabular}
\end{center}
\caption{ Compound Poisson process.  Value function $V(x)$ (solid), $G_1(x)$, $G_2(x)$ (dashed) and $Q_I(x)$, $Q_S(x)$ (dotted). Left: symmetric case.  Right: asymmetric case.
}
\end{figure}


\section{Callable futures under the Kou model}
\label{section:callable}
We consider the mathematical model of a financial market introduced by Kou \cite{kou}. There are two assets, a riskless asset given by $B_t=B_0e^{rt},\ t\geq 0$,
and risky asset $S=\{S_t\colon t\geq 0\}$ given by
$$
S_t=e^{X_t}=S_0\exp\left(
at+\sigma W_t-\sum_{i=1}^{N_t^{(1)}}Y_i^{(1)}+\sum_{i=1}^{N_t^{(2)}}Y_i^{(2)}
\right),\quad S_0=e^x,
$$
where a L\'evy process  $X=\{X_t\colon t\geq 0\}$
consists in a drifted Brownian motion plus double-sided exponential jumps.
More precisely, 
$W=\{W_t\colon t\geq 0\}$ is a standard Wiener process, $a\in\R$ and $\sigma> 0$,
$N^{(1)}=\{N_t^{(1)}\colon t\geq 0\}$
and $N^{(2)}=\{N_t^{(2)}\colon t\geq 0\}$ are two Poisson process 
 with respective positive intensities $\lambda_1,$ $\lambda_2$,
 the two sequences 
$Y^{(1)}=\{Y_i^{(1)}\colon i\geq 1\}$ and $Y^{(2)}=\{Y_i^{(2)}\colon i\geq 1\}$ are of independent identically distributed exponential random variables with respective positive parameters $\alpha_1$, $\alpha_2$. 
The five processes  $W$, $N^{(1)}$, $N^{(2)}$, $Y^{(1)}$, $Y^{(2)}$  are independent. 


In this market model, a perpetual callable American  futures contract is introduced, 
defined in the following way. Consider two positive strikes $k<K$.
If the holder (here is the max player) chooses the execution time $\tau$, the contract is struck at $K$, he then receives $S_\tau-K$. The contract has the additional feature that the issuer (the min player), by paying a penalty
$K-k$, can also choose the execution time $\sigma$,
of the same contract struck at $k$ (i.e. is a callable contract). 
The issuer then pays $S_\sigma-k$ to the holder.
The resulting contract is a discounted Dynkin game for the process $X$ with
payoff functions
 $$
G_1(x)=e^x-K < G_2(x)=e^x-k.
$$


The characteristic exponent of the L\'evy process $X$ is
$$
\Psi(z)=az+\sigma^2{z^2\over 2}-\lambda_1\frac{z}{\alpha_1+z}+\lambda_2\frac{z}{\alpha_2-z},
$$ 
therefore, the denominator in the Wiener-Hopf factorization is zero when
$\psi(z)=r$, that has four real roots $-n_2<-\alpha_1<- n_1<0<p_1<\alpha_2<p_2$.
Moreover, the Wiener-Hopf factorization \eqref{eq:wh}
gives the densities
\begin{align}
\label{eq:densityI_cf}
f_I(x)&=\pi_I n_2e^{n_2x}+(1-\pi_I)n_1e^{n_1x},\quad &x\leq 0,\\
\label{eq:densityS_cf}
f_S(x)&=\pi_S p_1e^{-p_1x}+(1-\pi_S)p_2e^{-p_2x},\quad &x\geq 0.
\end{align}
It is important to note that the condition $p_1>1$ in necessary for the integrability condition \eqref{eq:ic} to hold.
The coefficients in \eqref{eq:densityI_cf} and \eqref{eq:densityS_cf} are
$$
\pi_I={n_1(\alpha_1-n_1)\over\alpha_1(n_2-n_1)}
\qquad\text{and}\qquad
\pi_S={p_2(\alpha_2-p_1)\over\alpha_2(p_2-p_1)}.
$$
(see (11) with $n=1$ in \cite{ruin}).


\subsection{Averaging functions and thresholds}

In order to find functions $Q_I(x)$ and $Q_S(x)$ s.t. \eqref{eq:equal} holds,
we compute
$\E_xQ_I(I)$ for $x\leq x_I$
and 
$\E_xQ_S(S)$ for $x\geq x_S$ 
(using  \eqref{eq:densityI_cf} and  \eqref{eq:densityS_cf}),
\begin{align}
\E_xQ_I(I)&=\int_{-\infty}^0Q_I(x+y)f_I(y)dy
=\int_{-\infty}^{x\wedge x_I}Q_I(z)f_I(z-x)dz\notag\\
&={\pi_I}e^{-n_2x}\int_{-\infty}^{x\wedge x_I} n_2e^{n_2z}Q_I(z)dz
+\left(1-{\pi_I}\right)e^{-n_1x}\int_{-\infty}^{x\wedge x_I}n_1e^{n_1z}Q_I(z)dz.
\label{eq:continuity1}
\end{align}
Analogously
\begin{align}
\E_xQ_S(S)&=\int_0^\infty Q_S(x+y)f_S(y)dy=\int_{x\vee x_S}^\infty Q_S(z)f_S(z-x)dz\notag\\
&={\pi_S}e^{p_1x}\int_{x\vee x_S}^\infty p_1e^{-p_1z}Q_S(z)dz
+\left(1-{\pi_S}\right)e^{p_2x}\int_{x\vee x_S}^\infty p_2e^{-p_2z}Q_S(z)dz.
\label{eq:continuity2}
\end{align}
For convenience, we introduce the notation
\begin{align*}
N_2&=\pi_Ie^{-{n_2}x_S}\int_{-\infty}^{x_I} n_2e^{n_2z}Q_I(z)\,dz,
&N_1&=(1-\pi_I)e^{-{n_1}x_S}\int_{-\infty}^{x_I} n_1e^{n_1z}Q_I(z)\,dz,\\
P_1&=\pi_Se^{p_1x_I}\int_{x_S}^\infty p_1e^{-p_1z}Q_S(z)\,dz,
&P_2&=(1-\pi_S)e^{p_2x_I}\int_{x_S}^\infty p_2e^{-p_2z}Q_S(z)\,dz.
\end{align*}
Two continuity equations are obtained evaluating \eqref{eq:equal} at the points $x_I$ and $x_S$, with the computations in \eqref{eq:continuity1} and \eqref{eq:continuity2}. 
With the above notation, letting $u=x_S-x_I$, we obtain
\begin{align}
e^{n_2u}N_2+e^{n_1{u}}N_1+P_1+P_2&=e^{x_I}-k,\tag{I}\label{eq:1}\\
N_2+N_1+e^{p_1u}P_1+e^{p_2u}P_2&=e^{x_S}-K.\tag{II}\label{eq:2}
\end{align}
In view of \eqref{eq:ve}, our guess for the value function for $x\in(x_I,x_S)$, is
\begin{multline}\label{eq:valuekou}
V(x)=\\
\begin{cases}
e^x-k, &\text{$x\leq x_I$},\\    
N_2e^{-n_2(x-x_S)}+N_1e^{-n_1(x-x_S)}+P_1e^{p_1(x-x_I)}+P_2e^{p_2(x-x_I)},&\text{ $x_I\leq x\leq x_S$},\\
e^x-K, &\text{$x\geq x_S$.}
\end{cases}
\end{multline}
As $\P(S=0)=\P(I=0)=0$, in view of Proposition \ref{proposition:sp},
we expect the smooth pasting property to hold, at both contact points.
We then impose the continuity of derivatives, by differentiation of \eqref{eq:valuekou} from the left and the right at the points $x_I$ and $x_S$. 
The resulting equations are
\begin{align}
-n_2e^{n_2u}N_2-n_1e^{-n_1u}N_1+p_1P_1+p_2P_2&=e^{x_I},\tag{III}\label{eq:3}\\
-n_2N_2-n_1N_1+p_1e^{p_1u}P_1+p_2e^{p_2u}P_2&=e^{x_S}\tag{IV}\label{eq:4}.
\end{align}


In view of \eqref{eq:equal} and our computations in \eqref{eq:continuity1}
and \eqref{eq:continuity2}, we conclude that each averaging functions
should be a linear combination of three exponentials
\begin{align*}    
Q_I(z)	&=B_1e^{p_1(z-x_I)}+B_2e^{p_2(z-x_I)}+B_3e^z+B_4,\\
Q_S(z)	&=C_1e^{-n_1(z-x_S)}+C_2e^{-n_2(z-x_S)}+C_3e^z+C_4.
\end{align*}
After plugging these formulas in \eqref{eq:equal} to find the coefficients, 
and imposing the continuity of the averaging functions at $x_I$ and $x_S$,
we arrive at 
\begin{align}
-{(n_2+p_1)(n_1+p_1)\over n_1n_2+p_1\bar n}P_1-{(n_2+p_2)(n_1+p_2)\over n_1n_2+p_2\bar n}P_2+\frac{(n_2+1)(n_1+1)}{n_1n_2+\bar n}e^{z_I}&=k,\tag{V}\label{eq:5}\\
-{(n_2+p_1)(n_2+p_2)\over p_1p_2+n_2\bar p}N_2-{(n_1+p_1)(n_1+p_2)\over p_1p_2+n_1\bar p}N_1+{(p_1-1)(p_2-1)\over p_1p_2-\bar p}e^{z_S}&=K,\tag{VI}\label{eq:6}
\end{align}
where
\begin{equation*}
    \bar p=\pi_Sp_1+(1-\pi_S)p_2 \qquad\text{and}\qquad \bar n =\pi_In_2+(1-{\pi_I})n_1.   
\end{equation*}
Equations \eqref{eq:1}-\eqref{eq:6} constitute a nonlinear system of six equations to find the six unknowns
$N_2,N_1,P_1,P_2,x_I,x_S$.
Once a solution is found, we should check $x_I<x_S$, and that $Q_I$ is negative, $Q_S$ positive. Condition \eqref{eq:equal} holds (because it was imposed to obtain the system), but condition \eqref{eq:geq} should also be checked.
\subsection*{Numerical example}
To illustrate our result, we consider parameters $a=0$, $\sigma^2=2$,  
$\alpha_1=\alpha_2=8/3$, $\lambda_1=\lambda_2=35/18$  and $r=9$.
Besides, we take the values $k=1$, $K=2$ in the perpetual callable futures contracts.
The Wiener-Hopf factorization is
\begin{align*}
{r\over r-\Psi(z)}
=\frac{64-9z^2}{z^4-20z^2+64}
=\left(
\frac12{2\over2+z}+\frac12{4\over4+z}
\right)\left(
\frac12{2\over2-z}+\frac12{4\over4-z}
\right),
\end{align*}
therefore
$n_2=p_2=4,\  n_1=p_1=2,\  \pi_I=\pi_S=1/2.$

To solve the system we implement a numerical scheme that, departing 
from $x_I^{(0)}=k$ and $x_S^{(0)}=K$ computes $N_2^{(0)},N_1^{(0)},P_1^{(0)},P_2^{(0)}$ by solving \eqref{eq:1}-\eqref{eq:4}
that is a linear system in the four unknowns. With these four values we compute
$x_I^{(1)}$ and $x_S^{(1)}$ using \eqref{eq:5} and \eqref{eq:6}.
Afterwards we compute  $,N_2^{(1)},N_1^{(1)},P_1^{(1)},P_2^{(1)}$ and so on.
This scheme quickly converges to a solution such that the value function in \eqref{eq:valuekou} verifies all the requirements of Theorem \ref{theorem:1},
see Fig. \ref{lkf_v_cl}.
The solution is
$x_I=-0.211$ and
$x_S= 1.195$, with
$(N_2,N_1,P_1,P_2)= (-\num{3.347e-4},\ -\num{9.473e-3},\ \num{5.928e-2}, \ \num{1.181e-3}).$
\begin{figure}
\label{lkf_v_cl}
\begin{center}
\begin{tikzpicture}
\begin{axis}[
    axis lines=middle,
    axis line style=->,
    xmin = -2.0, xmax = 2.0,
    ymin = -2.0, ymax = 2.0,
    xtick distance = 2,
    ytick distance = 2,
    grid = both,
    minor tick num = 1,
    major grid style = {lightgray},
    minor grid style = {lightgray!25},
    width = 0.7\textwidth,
    height = 0.7\textwidth,    enlargelimits,
    tick label style={font=\tiny}
]
    \addplot[
        domain = -2.0:-0.2107,
        samples = 200,
        smooth,
        thick,
        black]
        {g2(x)};
\addplot[
        domain = -0.2107:1.1954,
        samples = 200,
        smooth,
        thick,
        black,
    ] {v(x))};
\addplot[
        domain = 1.1954:2.0,
        samples = 200,
        smooth,
        thick,
        black,
    ] {(g1(x)};
     \addplot[
        domain = -2.0:2.0,
        samples = 200,
        smooth, thick,
        black,dashed]
        {g2(x)};
     \addplot[
        domain = -2.0:2.0,
        samples = 200,
        smooth, thick,
        black,dashed]
        {g1(x)};
        \addplot[
        domain = -2.0:-0.2107,
        samples = 200,
        smooth, thick,
        black,dotted]
        {qi(x)};
        \addplot[
        domain =  1.1954:2.0,
        samples = 200,
        smooth, thick,
        black,dotted]
        {qs(x)};       
\end{axis}
\end{tikzpicture}
\end{center}
\caption
{Callable future under the Kou model. Value function $V(x)$ (solid), $G_1(x)$, $G_2(x)$ (dashed) and $Q_I(x)$, $Q_S(x)$ (dotted).}
\end{figure}
\section{Conclusions} 
\label{sec:conclusion}
We present a verification theorem to solve a Dynkin game
driven by a L\'evy processes.
The theorem requires finding two averaging functions,
that give an explicit form of the value function
in terms of the infimum and the supremum of the process.
The optimal stopping times result to be the entry times of the 
support sets of these averaging functions.
The main challenge in this problem is managing the 
characteristic
overshoot caused by the jumps of the L\'evy processes, 
rendering traditional solution techniques, used for diffusions, ineffective.
Notably, in some instances, 
when Wiener-Hopf factors can be computed explicitly, 
we are able to identify the averaging functions and solve the problem completely. 
We first consider Brownian motion with drift, the Cram\'er-Lundberg process and the compound Poisson process with affine payoffs.
Secondly, we present a financial application, a 
perpetual callable American futures  
in the Kou's market model.
In these examples we discuss the smooth pasting property,
that does not always hold.

\bibliographystyle{apalike}
\bibliography{bibliografia_articulo}

\end{document}